\newtheorem{corollary}{Corollary}
\newtheorem{proposition}{Proposition}
\theoremstyle{definition}
\newtheorem{remark}{Remark}
\newcommand{\avg}[1]{\langle{#1}\rangle}
\definecolor{Red}{rgb}{1,0,0}
\definecolor{Blue}{rgb}{0,0,1}
\newcommand{\red}[1]{{#1}} %nothing
\newcommand\R{\mathbb{R}}
\newcommand\1{\bf 1}
\newcommand\sech{\,\mbox{sech}}
\title[An energy-consistent depth-averaged Euler system] %Use the shortened version of the full title
      {An energy-consistent depth-averaged Euler system: Derivation
  and properties}
\author[M.-O.~Bristeau, A.~Mangeney, J.~Sainte-Marie and N.~Seguin]{}
\subjclass{35L60, 35Q30, 76B15, 76D05}
 \keywords{Navier-Stokes equations, Saint-Venant equations, free
surface flows, non-hydrostatic model, dispersive terms, analytical solutions.}
 \email{Marie-Odile.Bristeau@inria.fr}
 \email{mangeney@ipgp.fr}
\email{Jacques.Sainte-Marie@inria.fr}
 \email{Nicolas.Seguin@upmc.fr}
\begin{document}
\maketitle

% Enter the first author's name and address:
\centerline{\scshape Marie-Odile Bristeau$^{1, 2, 3, 4}$,  Anne
  Mangeney$^{5, 1, 2, 3, 4}$}
\centerline{\scshape Jacques Sainte-Marie$^{2, 1, 3, 4}$ and Nicolas Seguin$^{3, 1, 2, 4}$}
\medskip
{\footnotesize
% please put the address of the first author
 \centerline{$^{1}$Inria, EPC ANGE, Rocquencourt - B.P. 105}
  \centerline{F78153 Le Chesnay cedex, France}
  \medskip
 \centerline{$^{2}$CEREMA,  134 rue de Beauvais}
  \centerline{F-60280 Margny-L\`es-Compi\`egne, France}
    \medskip
  \centerline{$^{3}$Sorbonne Universit\'es, UPMC Univ Paris 06, UMR 7598}
 \centerline{Laboratoire Jacques-Louis Lions, F-75005, Paris, France}
   \medskip
\centerline{$^{4}$CNRS, UMR 7598, Laboratoire Jacques-Louis Lions}
\centerline{F-75005, Paris, France}
  \medskip
\centerline{$^{5}$University Paris Diderot, Sorbonne Paris Cit\'e}
\centerline{Institut de Physique du Globe de Paris}
\centerline{Seismology group, 1 rue Jussieu, 75005 Paris, France}
} % Do not forget to end the {\footnotesize by the sign }

%\medskip
%
%\centerline{\scshape Anne Mangeney$^{1, 2, 3, 4, 5}$}
%\medskip
%{\footnotesize
% % please put the address of the second  and third author
%\centerline{$^{5}$University Paris Diderot, Sorbonne Paris Cit\'e, Institut de Physique du Globe de Paris}
%\centerline{Seismology group, 1 rue Jussieu, 75005 Paris, France}
%\centerline{and}
% \centerline{Inria, EPC ANGE, Rocquencourt - B.P. 105, F78153 Le Chesnay cedex, France}
% \centerline{and}
% \centerline{CEREMA, EPC ANGE,  134 rue de Beauvais, F-60280 Margny-L\`es-Compi\`egne, France}
% \centerline{Sorbonne Universit\'es, UPMC Univ Paris 06, UMR 7598,
%   Laboratoire Jacques-Louis Lions, F-75005, Paris, France}
%\centerline{CNRS, UMR 7598, Laboratoire Jacques-Louis Lions, F-75005, Paris, France}
%}
%\medskip
%
%\centerline{\scshape Jacques Sainte-Marie$^{3, 1, 2, 4}$ and Nicolas Seguin$^{3, 1, 2, 4}$}
%\medskip
%{\footnotesize
%% please put the address of the first author
%\centerline{Sorbonne Universit\'es, UPMC Univ Paris 06, UMR 7598,
%   Laboratoire Jacques-Louis Lions, F-75005, Paris, France}
% \centerline{Inria, EPC ANGE, Rocquencourt - B.P. 105, F78153 Le Chesnay cedex, France}
% \centerline{CEREMA, EPC ANGE,  134 rue de Beauvais, F-60280
%   Margny-L\`es-Compi\`egne, France}
%\centerline{CNRS, UMR 7598, Laboratoire Jacques-Louis Lions, F-75005, Paris, France}
%} % Do not forget to end the {\footnotesize by the sign }

\bigskip

% The name of the associate editor will be entered by an editorial staff
% "Communicated by the associate editor name" is not needed for special issue.
 \centerline{(Communicated by Benoit Perthame)}

\begin{abstract}
\red{In this paper, we present an original derivation process of a
  non-hydrostatic shallow water-type model which aims at approximating
  the incompressible Euler and Navier-Stokes systems with free
  surface.} The closure relations are obtained by a
minimal energy constraint instead of an asymptotic expansion. The model slightly differs from the
well-known Green-Naghdi model and is confronted with stationary and
analytical solutions of the Euler system corresponding to rotational
flows. \red{At the end of the paper}, we give
time-dependent analytical solutions for the Euler system that are also
analytical solutions for the proposed model but that are not solutions
of the  Green-Naghdi model. \red{We also give and compare analytical solutions of the
two non-hydrostatic shallow water models.}
\end{abstract}

%%%%%%%%%%%%%%%%%%
\section{Introduction}
%%%%%%%%%%%%%%%%%%

Despite the progress in the analysis and numerical approximation of
the incompressible Euler and Navier-Stokes equations with free
surface, there exists a demand for models of reduced
complexity such as shallow water type models to represent gravity
driven geophysical flows. In particular, the accurate description
of the topography or bathymetry that play a key role in landslide
dynamics or ocean wave propagation, requires simplified models
to reduce the associated high computational cost.

Non-linear shallow water equations model the dynamics of a shallow,
rotating layer of homogeneous incompressible fluid and are typically
used to describe vertically averaged flows in two or three dimensional domains
in terms of horizontal velocity and depth variations. The classical Saint-Venant system \cite{saint-venant}
with viscosity and friction \cite{saleri,gerbeau,marche} is particularly well-suited for the
study and numerical simulations of a large class of geophysical phenomena
such as rivers, \red{lava flows, ice sheets,} coastal domains, oceans or even run-off or avalanches
when being modified with adapted source terms \cite{Bouchut2003531,bouchut,mangeney07}. But the Saint-Venant system is built on the hydrostatic
assumption  consisting in neglecting the
vertical acceleration of the fluid. This assumption is valid for a large class of geophysical flows but is
restrictive in various situations where the dispersive effects~-- such as
those occuring in wave propagation~-- cannot
be neglected. As an example, neglecting the vertical acceleration in granular
flows or landslides leads to significantly overestimate the initial flow velocity
\cite{mangeney05,mangeney_nature1}, with strong implication for hazard assessment.

The modeling of the non-hydrostatic effects for shallow water flows
does not raise insuperable
difficulties~\cite{green,camassa1,bbm,nwogu,peregrine,JSM_DCDS} but the analysis
\cite{lannes,li} of the resulting models and their discretization become
tough. The assumption of potential flows is often used to derive
dispersive models and an extensive literature exists concerning
these models. The most important contributions have been
proposed by Lannes and
co-authors~\cite{chazel1,chazel2,bonneton_lannes,lannes,lannes1}, see also~\cite{dutykh}.

The non-hydrostatic model presented in this paper is not based on the
irrotational assumption, on the other hand it is not derived using
an asymptotic expansion of the incompressible Navier-Stokes or Euler
based on the classical shallow water assumptions. Even if such an
asymptotic expansion approach is natural, it leads to difficulties for the approximation of
the non-hydrostatic pressure terms.
% Indeed, the shallow water
% assumption incites to choose approximations of the fluid velocities that
% are constant (resp. linear) in the vertical coordinate for the
% horizontal (resp. vertical) component of the velocity field. Then
% these choices coupled with the asymptotic expansion imply a quadratic
% expression~-- in the vertical coordinate ~-- of the non-hydrostatic
% pressure terms but such a choice leads to difficulties in the
% definition for some of the closure relations.

To overcome these problems, we propose a strategy for the model
derivation that is widely used in the kinetic framework to obtain
kinetic descriptions e.g. of conservations
laws~\cite{levermore,perthame}. The required closure relations to
obtain a depth-averaged model approximating the Euler or Navier-Stokes
system satisfy an energy-based optimality criterion. As a
consequence, the proposed model slightly differs from existing models
especially the well-known Green-Naghdi model~\cite{green,li}. It consists
in a set of first order partial differential equations and compared to
the Green-Naghdi model, the contribution of the non-hydrostatic
pressure terms differs from a scaling coefficient. Illustrating these differences, we give
time-dependent analytical solutions for the Euler system that are also
analytical solutions for the proposed model but that are not solution
of the  Green-Naghdi model.

The discretization of the proposed model is not in the scope of this
paper, we only notice that numerical techniques have been recently
proposed for the approximation of non-hydrostatic models but their properties (numerical cost/robustness) are not
fully satisfactory~\cite{chazel2,JSM_CF,lemetayer} for practical uses especially
in 2d with unstructured meshes. Since this model has the structure of a
conservation law with additional terms and only contains first order
derivatives, we hope that it can be
discretized more easily using finite volume techniques.

The paper is organized as follows. In Section~\ref{sec:NS}, we recall
the incompressible Navier-Stokes equations with free surface with the
associated boundary conditions and we deduce the Euler system. In Section~\ref{sec:av_euler} we
derive the proposed non-hydrostatic model. Some of its properties
are investigated in Section~\ref{sec:properties} and confrontations with analytical
solutions are given in Section~\ref{sec:anal_sol}.

%%%%%%%%%%%%%%%%%%%%%%%%%%%%%%%%%%%%%%%%
\section{The Navier-Stokes and Euler systems}
%%%%%%%%%%%%%%%%%%%%%%%%%%%%%%%%%%%%%%%%
\label{sec:NS}

In this section, we present the Navier-Stokes and Euler systems with
their associated boundary conditions.

\subsection{The Navier-Stokes equations}

% When dealing with free surface geophysical flows, the assumption of
% shallow water or shallow layer is often helpful. Unfortunately and
% even if the viscous effects are not the leading terms in many
% geophysical flows, these assumptions can hardly be justified starting from the Euler
% system. For this reason, we depart from the Navier-Stokes equations
% depicted below and
% the Euler system is obtained as the limit of the Navier-Stokes system when the viscosity and
% bottom friction vanish, see~\cite{JSM_M3AS}. In Section~\ref{sec:av_euler}, for the sake of
% simplicity the model derivation is carried out starting from the Euler
% system whereas in paragraph~\ref{sec:NS_av}, the proposed version includes the
% viscous and friction effects.

The Navier-Stokes equations restricted to two dimensions have the
following general formulation

\ \vspace*{-10pt}
\begin{subnumcases}{\label{eq:navier-stokes}}
\frac{\partial u}{\partial x} + \frac{\partial w}{\partial z} = 0,\label{eq:NS_2d1}\\
\frac{\partial u}{\partial t} + u \frac{\partial u}{\partial x} + w \frac{\partial u}{\partial z} + \frac{\partial p}{\partial x} = \frac{\partial \Sigma_{xx}}{\partial x} + \frac{\partial \Sigma_{xz}}{\partial z},\label{eq:NS_2d2}\\
\frac{\partial w}{\partial t} + u\frac{\partial w}{\partial x} + w\frac{\partial w}{\partial z} + \frac{\partial p}{\partial z} = -g + \frac{\partial \Sigma_{zx}}{\partial x} + \frac{\partial \Sigma_{zz}}{\partial z},
\label{eq:NS_2d3}
%\end{eqnarray}
\end{subnumcases}
%\begin{eqnarray}
%& & \frac{\partial u}{\partial x} + \frac{\partial w}{\partial z} = 0,\label{eq:eul_2d1}\\
%& & \frac{\partial u}{\partial t} + u \frac{\partial u}{\partial x} + w \frac{\partial u}{\partial z} + \frac{\partial p}{\partial x} = 0,\label{eq:eul_2d2}\\
%& & \frac{\partial w}{\partial t} + u\frac{\partial w}{\partial x} + w\frac{\partial w}{\partial z} + \frac{\partial p}{\partial z} = -g,
%\label{eq:eul_2d3}
%\end{eqnarray}
where the $z$ axis represents the vertical
direction. We consider this system for
$$t > t_0, \quad x \in \R, \quad z_b(x,t) \leq z \leq \eta(x,t),$$
where $\eta(x,t)$ represents the free surface elevation, ${\bf u}=(u,w)^T$ the horizontal and vertical velocities. The water
height is $H = \eta - z_b$, see Fig.~\ref{fig:notations}. We consider that the
bathymetry $z_b$ can vary with respect to abscissa $x$ and also with
respect to time $t$. The chosen form of the viscosity stress tensor is symmetric
\begin{eqnarray*}
 \Sigma_{xx} = 2 \mu \frac{\partial u}{\partial x}, & \qquad & \Sigma_{xz} = \mu \bigl( \frac{\partial u}{\partial z} + \frac{\partial w}{\partial x} \bigr),\label{eq:visco1}\\
\Sigma_{zz} = 2 \mu \frac{\partial w}{\partial z}, & \qquad & \Sigma_{zx} = \mu \bigl( \frac{\partial u}{\partial z} + \frac{\partial w}{\partial x}\bigr),
\label{eq:visco2}
\end{eqnarray*}
with $\mu$ the viscosity that is supposed constant. For a more general form of the viscosity tensor, see Ref.~\cite{decoene,levermore}. We define the
total stress tensor $\Sigma_T$
$$\Sigma_T = -p I_d + \Sigma.$$

\begin{figure}[pb]
\begin{center}
%\resizebox{9cm}{!}{\input Figures/notations.pstex_t}
\resizebox{9cm}{!}{\input 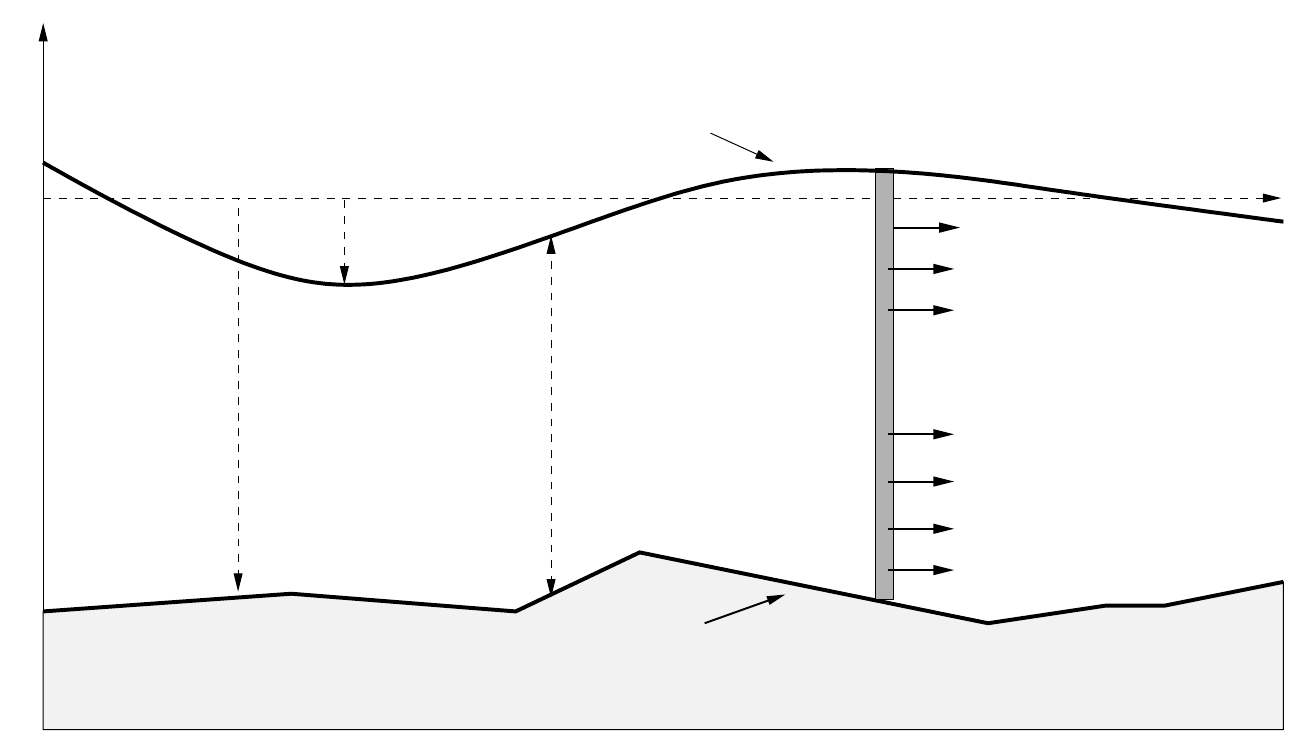_t}
\caption{Notations: water height $H(x,t)$, free surface $\eta(x,t)$ and bottom $z_b(x,t)$.}
\end{center}
\label{fig:notations}
\end{figure}
As in Ref.~\cite{gerbeau}, we introduce the indicator function for the fluid
region
\begin{equation}
\varphi(x,z,t) = \left\{\begin{array}{ll}
1 & \hbox{ for } (x,z) \in \Omega = \{(x,z)\,|\, z_b \leq z \leq \eta\},\\
0 & \hbox{ otherwise.}
\end{array}\right.
\label{eq:varphi}
\end{equation}
The fluid region is advected by the flow, which can be expressed, thanks
to the incompressibility condition, by the relation
\begin{equation}
\frac{\partial \varphi}{\partial t} + \frac{\partial \varphi u}{\partial
x} + \frac{\partial \varphi w}{\partial z} = 0.
\label{eq:advec_phi}
\end{equation}
The solution $\varphi$ of this equation takes the values 0 and 1 only
but it needs not be of the form (\ref{eq:varphi}) at all times. The
analysis below is limited to the conditions where this form is
preserved. For a more complete presentation of the Navier-Stokes
system and its closure, the reader can refer to~\cite{lions}.

\begin{remark}
Notice that in the fluid domain,
Eq.~(\ref{eq:advec_phi}) reduces to the divergence free condition whereas
across the upper and lower boundaries it gives the
kinematic boundary conditions defined in the following.
\end{remark}

%In order to derive shallow water type models, it is possible to
%consider a simplified form of the viscosity tensor, see~\cite{decoene}
%for hydrostatic models, \cite{JSM_DCDS} for non-hydrostatic models and
%\cite{JSM_M2AN} for multilayer Saint-Venant systems. In the
%numerical schemes classically used for conservation laws, the
%viscosity and friction contributions lead to terms that are included using
%a fractional time scheme distinct from the one convective and
%pressure terms.

%%%%%%%%%%%%%%%%%%%%%%%%%%%%%
\subsection{Boundary conditions}
\label{subsec:BC}

The system \eqref{eq:navier-stokes} is completed with boundary conditions. The outward and upward unit normals to the free surface ${\bf n}_s$ and to the bottom ${\bf n}_b$ are
given by
$${\bf n}_s = \frac{1}{\sqrt{1 + \bigl(\frac{\partial \eta}{\partial x}\bigr)^2}}  \left(\begin{array}{c} -\frac{\partial \eta}{\partial x}\\ 1 \end{array} \right), \quad {\bf n}_b = \frac{1}{\sqrt{1 + \bigl(\frac{\partial z_b}{\partial x}\bigr)^2}}  \left(\begin{array}{c} -\frac{\partial z_b}{\partial x}\\ 1 \end{array} \right).$$

%%%%%%%%%%%%%%%%%%%%%%%%%%%%%
\subsubsection{At the free surface}

Classically at the free surface we have the kinematic boundary condition
\begin{equation}
\frac{\partial \eta}{\partial t} + u_s \frac{\partial \eta}{\partial x}
-w_s = 0,
\label{eq:free_surf}
\end{equation}
where the subscript $s$ denotes the value of the
considered quantity at the free surface. The dynamical condition at the free surface takes into account the
equilibrium with the atmospheric pressure. Considering the air viscosity is
negligible, the continuity
of stresses at the free boundary imposes
\begin{equation}
\Sigma_T {\bf n}_s = -p^a(x,\eta(x,t),t) {\bf n}_s,
\label{eq:BC2}
\end{equation}
where $p^a=p^a(x,z,t)$ is a given function corresponding to the
atmospheric pressure.

%%%%%%%%%%%%%%%%%%%%%%%%%%%%%
\subsubsection{At the bottom}

Since we consider that the bottom can vary with respect to time $t$, the
kinematic boundary condition is
\begin{equation}
\frac{\partial z_b}{\partial t} + u_b \frac{\partial z_b}{\partial x}
-w_b = 0,
\label{eq:bottom}
\end{equation}
where the subscript $b$ denotes the value of the
considered quantity at the bottom and $(x,t) \mapsto z_b(x,t)$ is a
given function. Notice that Eq.~(\ref{eq:bottom})
reduces to a classical no-penetration condition when $z_b$
does not depend on time $t$.

For the stresses at the bottom we consider a friction law under the form
\begin{equation}
\Sigma_T {\bf n}_b - ({\bf n}_b . \Sigma_T {\bf n}_b){\bf n}_b = \kappa {\bf v}_b,
\label{eq:BC_z_b}
\end{equation}
with ${\bf v}_b={\bf u}_b - (0,\frac{\partial z_b}{\partial t})^T$ the
relative velocity between the water and the bottom and $\kappa$ is a positive friction coefficient. Let
  ${\bf t}_b$ satisfies $({\bf t}_b)^t {\bf n}_b = 0$ then after
  multiplication by ${\bf n}_b$, Eq.~(\ref{eq:BC_z_b}) leads to
$$({\bf v}_b)^t {\bf n}_b = 0,$$
that is equivalent to Eq.~(\ref{eq:free_surf}). Similarly multiplying
Eq.~(\ref{eq:BC_z_b}) by ${\bf t}_b$ gives
\begin{equation}
({\bf t}_b)^t \Sigma_T {\bf n}_b = \kappa ({\bf v}_b)^t {\bf
  t}_b = \kappa \left( 1+ \left(\frac{\partial z_b}{\partial x}
  \right)^2\right)u_b.
\label{eq:navier_simple}
\end{equation}
%And if $z_b(x,t)$ is constant $z_b(x,t)=z_b^0$, the preceding relation reduces to the classical Navier friction condition
%\begin{equation}
%\left. \Sigma_{xz} \right|_b = \kappa u_b.
%\label{eq:navier_simple}
%\end{equation}

%%%%%%%%%%%%%%%%%%%%%%%%%%%%%
\subsection{Energy balance}

We recall the fondamental stability property related to the fact that
the Navier-Stokes system admits an energy
\begin{equation}
E = E(z;u,w) = \frac{u^2+w^2}{2} + gz,
\label{eq:energy_exp}
\end{equation}
leading to the following equation
\begin{equation}
\begin{split}
\frac{\partial}{\partial t}\int_{z_b}^{\eta} (E + p^a)\ dz +&
\frac{\partial}{\partial x}\int_{z_b}^{\eta} \left[ u\bigl( E + p
  \bigr) -\mu \left( 2u\frac{\partial u}{\partial x} + w\left(
      \frac{\partial u}{\partial z} + \frac{\partial w}{\partial x}\right)\right)\right] dz
=\\
& -2\mu\int \left[ \left(\frac{\partial u}{\partial x}\right)^2 +
  \frac{1}{2}\left(\frac{\partial u}{\partial z} + \frac{\partial
      w}{\partial x}\right)^2 + \left(\frac{\partial w}{\partial
      z}\right)^2 \right] dz\\
 &
+ H\frac{\partial p^a}{\partial t}
+
(\left. p\right|_b-p^a)\frac{\partial z_b}{\partial t} - \kappa u_b.
\label{eq:energy_eq}
\end{split}
\end{equation}

\subsection{The Euler system}

Neglecting the viscous effects, we consider the Euler equations written in a conservative form
\begin{subnumcases}{\label{eq:euler}}
\frac{\partial {\varphi}}{\partial {t}} + \frac{\partial {\varphi
    u}}{\partial {x}} + \frac{\partial {\varphi w}}{\partial {z}} = 0,\label{eq:eul_2d1}\\
\frac{\partial {\varphi  u}}{\partial {t}} + \frac{\partial \varphi{u}^2}{\partial {x}} + \frac{\partial \varphi{u}{w}}{\partial {z}} + \varphi\frac{\partial {p}}{\partial {x}} = 0,\label{eq:eul_2d2}\\
\frac{\partial {\varphi  w}}{\partial {t}} + \frac{\partial
  \varphi {u}{w}}{\partial {x}} + \frac{\partial \varphi
  {w}^2}{\partial {z}}  + \varphi \frac{\partial {p}}{\partial {z}} =
-\varphi  g,\label{eq:eul_2d3}
\end{subnumcases}
with $\varphi$ defined by~\eqref{eq:varphi}. The energy equation
writes
\begin{equation}
\frac{\partial}{\partial t}\int_{z_b}^{\eta} (E + p^a)\ dz +
\frac{\partial}{\partial x}\int_{z_b}^{\eta}  u\bigl( E + p
  \bigr)
=  H\frac{\partial p^a}{\partial t} +
(\left. p\right|_b-p^a)\frac{\partial z_b}{\partial t},
\label{eq:energy_eq_euler}
\end{equation}
with $E$ defined by~\eqref{eq:energy_exp}. This system is completed with the boundary conditions~\eqref{eq:free_surf},\eqref{eq:BC2}
and~\eqref{eq:bottom}. In our case, \eqref{eq:BC2} reduces to
\begin{equation}
\left. p \right|_s = p^a.
\label{eq:ps}
\end{equation}

For the sake of simplicity, in the following we neglect the
variations of the atmospheric pressure $p^a$ i.e. $p^a =
p^a_0$ with $p^a_0=0$.
%Likewise we assume the bottom topography does not depend on time $t$, i.e.
%$$\frac{\partial z_b}{\partial t} = 0.$$

%%%%%%%%%%%%%%%%%%%%%%%%%%%%%
\subsection{Non negativity of the pressure}
\label{subsec:assump}

We also suppose in each point of the fluid region~-- including at the bottom~-- we have
$$p - p^a \geq 0 .$$
The analysis below and especially the kinetic interpretation is restricted to this situation.
Notice that in the case of hydrostatic Euler equations since we have
$$p - p^a = g(\eta -z),$$
this assumption reduces to the non-negativity of the water height $H$.

%%%%%%%%%%%%%%%%%%%%%%%%%%%%%%%%%%%%%
\section{Depth-averaged solutions of the Euler and Navier-Stokes systems}
%%%%%%%%%%%%%%%%%%%%%%%%%%%%%%%%%%%%%
\label{sec:av_euler}

In this section we take the vertical average of the Euler system and
study  the necessary closure relations for this system.

Let us denote $\avg{f}$ the average along the vertical axis, the
so-called \emph{depth-average}, of the quantity $f=f(z)$
i.e.
\begin{equation}
\avg{f}(x,t) = \int_\R f(x,z,t) \ dz.
\label{eq:mean1}
\end{equation}
\red{During the derivation process of the model, we assume the bottom topography does not depend on time $t$, i.e.
$$\frac{\partial z_b}{\partial t} = 0.$$
The contribution of the time variations of the bottom topography is
given in remark~\ref{rem:zb_t}.}

\subsection{Depth-averaging of the Euler solution}
\label{subsec:depth-averaging}

The goal is to transpose the
entropy-based moment closures proposed by Levermore in
\cite{levermore_closure} for kinetic equations to our framework. In such a
way, we obtain a nonperturbative derivation of shallow-water models
which is justified by an entropy minimization process under constraint.
The constraints concern the moments of the solution of the Euler
equation, which are here the depth-averaged variables.

Taking into account the kinematic boundary conditions~\eqref{eq:free_surf}
and~\eqref{eq:bottom}, the depth-averaged form of the Euler system~\eqref{eq:euler}
writes
\begin{subnumcases}{\label{eq:euler_av}}
  \frac{\partial}{\partial t} \avg{\varphi} + \frac{\partial}{\partial x} \avg{\varphi u} = 0 , \label{eq:av1}\\
  \frac{\partial}{\partial t} \avg{\varphi u} +
  \frac{\partial}{\partial x} \avg{\varphi u^2} + \avg{\varphi
    \frac{\partial p}{\partial x} } = 0 , \label{eq:av2}\\
  \frac{\partial}{\partial t} \avg{\varphi w} +
  \frac{\partial}{\partial x} \avg{\varphi u w} + \avg{\varphi
    \frac{\partial p}{\partial z}} = -\avg{\varphi g},\label{eq:av3}\\
\frac{\partial}{\partial t} \avg{\varphi z} + \frac{\partial}{\partial
  x} \avg{\varphi z u} = \avg{\varphi w},\label{eq:av4}
\end{subnumcases}
where the last equation is a rewriting of
$$\avg{ \int_{z_b}^z \left(\frac{\partial {\varphi}}{\partial {t}} + \frac{\partial {\varphi
    u}}{\partial {x}} + \frac{\partial {\varphi w}}{\partial
  {z}}\right) dz} = \avg{z \left( \frac{\partial {\varphi}}{\partial {t}} + \frac{\partial {\varphi
    u}}{\partial {x}} + \frac{\partial {\varphi w}}{\partial {z}}\right)}
 =0,$$
using again the kinematic boundary conditions. Notice that using the
definition~\eqref{eq:varphi}, we have
\begin{equation}
\avg{\varphi} = H,\quad\text{and }\; \avg{\varphi z} = \frac{\eta^2 -
  z_b^2}{2}.
\label{eq:moments01}
\end{equation}
Simple manipulations allow to obtain the
system~\eqref{eq:euler_av} from the Euler
system~\eqref{eq:euler},\eqref{eq:free_surf} and~\eqref{eq:bottom} e.g. for Eq.~\eqref{eq:av1},
starting from~\eqref{eq:eul_2d1} we write
$$\avg{\frac{\partial \varphi}{\partial t} + \frac{\partial \varphi u}{\partial
    x} + \frac{\partial \varphi w}{\partial z}} = 0,$$
and permuting the derivative with the integral using the Leibniz rule
directly gives~\eqref{eq:av1}.

We decompose the pressure $p$ under the form
$$p = g(\eta-z) + p_{nh},$$
i.e. the sum of the hydrostatic and non-hydrostatic parts of the
pressure. Hence, the system~\eqref{eq:euler_av} becomes
\begin{subnumcases}{\label{eq:euler_av1}}
  \frac{\partial}{\partial t} \avg{\varphi } + \frac{\partial}{\partial x} \avg{\varphi u} = 0 , \label{eq:av11}\\
  \frac{\partial}{\partial t} \avg{\varphi u} +
  \frac{\partial}{\partial x} \left( \avg{\varphi u^2} +
    g\avg{\varphi (\eta-z)} + \avg{\varphi p_{nh}}\right)  =  -\left(g \avg{\varphi} + \left. p_{nh}\right|_b\right)
  \frac{\partial z_b}{\partial x} ,\label{eq:av22} \\
  \frac{\partial}{\partial t} \avg{\varphi w} +
  \frac{\partial}{\partial x} \avg{\varphi u w} =
  \left. p_{nh}\right|_b,\label{eq:av33}\\
\frac{\partial}{\partial t} \avg{\varphi z} + \frac{\partial}{\partial
  x} \avg{\varphi z u} = \avg{\varphi w},\label{eq:av44}
\end{subnumcases}
where the boundary condition~\eqref{eq:ps} has been used. The energy
equation~\eqref{eq:energy_eq_euler} gives
\begin{equation}
  \frac{\partial}{\partial t} \avg{\varphi E} +
  \frac{\partial}{\partial x} \avg{\varphi u(E+p)} = 0,
\label{eq:energy_av}
\end{equation}
where $E(z;u,w)$ is defined by~\eqref{eq:energy_exp}.

Therefore the
system~\eqref{eq:euler_av1} has four equations with four
unknowns, namely $\avg{\varphi}$, $\avg{\varphi u}$, $\avg{\varphi w}$
and $\avg{\varphi p_{nh}}$ and closure relations are needed to define $\avg{\varphi u^2}$, $\avg{\varphi u w}$,
$\avg{\varphi z u}$ and $p_{nh}|_b$.

If $u'$,$w'$ are defined as the
deviations of $u$,$w$ with respect to their depth-averages, then it comes
\begin{equation}
  \label{eq:uaverage}
  \varphi u = \varphi\frac{\avg{\varphi u}}{\avg{\varphi}} + \varphi u',\quad \varphi w =
 \varphi \frac{\avg{\varphi w}}{\avg{\varphi}} + \varphi w',
\end{equation}
with $\avg{\varphi u'} = \avg{\varphi w'} = 0$. Following the moment closure proposed by Levermore~\cite{levermore_closure}, we
study the minimization problem
\begin{equation}
  \label{eq:minSW}
\min_{u',w'} \avg{ \{ \varphi E(z;u,w)  \}}.
\end{equation}
The energy $E(z;u,w) $ being quadratic with respect
to $u$ we notice
\begin{equation}
  \label{eq:uaverage2}
  \begin{aligned}
    \avg{\varphi u^2} & = \frac{\avg{\varphi u}^2}{\avg{\varphi}} + 2\avg{\varphi uu'} +
    \avg{\varphi (u')^2} \\
    &= \frac{\avg{\varphi u}^2}{\avg{\varphi}} + \avg{\varphi(u')^2} \\
    &\geq \frac{\avg{\varphi u}^2}{\avg{\varphi}},
  \end{aligned}
\end{equation}
and similarly, we obtain
\begin{equation}
  \label{eq:waverage2}
\avg{\varphi w^2} \geq \frac{\avg{\varphi w}^2}{\avg{\varphi}}.
\end{equation}
Eqs.~\eqref{eq:uaverage2} and~\eqref{eq:waverage2} mean that the solution
of the minimization problem~\eqref{eq:minSW} is given by
\begin{equation}
  \label{eq:minSW1}
\avg{\varphi E\left(z;\frac{\avg{\varphi
        u}}{\avg{\varphi}},\frac{\avg{\varphi
        w}}{\avg{\varphi}}\right)} = \min_{u',w'} \avg{ \{ \varphi E(z;u,w)  \}},
\end{equation}
and
\begin{equation}
  \label{eq:tildeE}
  \avg{\varphi E\left(z;\frac{\avg{\varphi u}}{\avg{\varphi}},\frac{\avg{\varphi w}}{\avg{\varphi}}\right)} =
  \frac{\avg{\varphi u}^2 + \avg{\varphi w}^2}{2\avg{\varphi}} + g \avg{\varphi z},
\end{equation}
Since the only choice leading to equalities in
relations~\eqref{eq:uaverage2} and~\eqref{eq:waverage2} corresponds to
\begin{equation}
u = \frac{\avg{\varphi u}}{\avg{\varphi}},\quad\text{and }\; w =
\frac{\avg{\varphi w}}{\avg{\varphi}},
\label{eq:closures}
\end{equation}
this allows to precise the closure relations associated to a minimal
energy, namely
\begin{subnumcases}{\label{eq:closure}}
\avg{\varphi u^2} = \frac{\avg{\varphi u}^2}{\avg{\varphi}},\label{eq:close1}\\
\avg{\varphi u w}  = \frac{\avg{\varphi u}\avg{\varphi w}}{\avg{\varphi}},\label{eq:close2}\\
\avg{\varphi z u} = \avg{\varphi z }\frac{\avg{\varphi u}}{\avg{\varphi }}. \label{eq:close3}
\end{subnumcases}
Replacing~\eqref{eq:closure} into Eqs.~\eqref{eq:euler_av1} leads to the system
\begin{subnumcases}{\label{eq:euler_av_phi}}
  \frac{\partial}{\partial t} \avg{\varphi } + \frac{\partial}{\partial x} \avg{\varphi u} = 0 ,\label{eq:phi1}\\
  \frac{\partial}{\partial t} \avg{\varphi u} +
  \frac{\partial}{\partial x} \left( \frac{\avg{\varphi u}^2}{\avg{\varphi}} +
    g\avg{\varphi (\eta-z)} + \avg{\varphi p_{nh}}\right)  =  \nonumber\\
\hspace*{7cm} -\left(g \avg{\varphi} + \left. p_{nh}\right|_b\right)
  \frac{\partial z_b}{\partial x} ,\label{eq:phi2}\\
 \frac{\partial}{\partial t} \avg{\varphi w} +
  \frac{\partial}{\partial x} \avg{\varphi w}\frac{\avg{\varphi u}}{\avg{\varphi}} =
  \left. p_{nh}\right|_b,\label{eq:phi3}\\
\frac{\partial}{\partial t} \avg{\varphi z} + \frac{\partial}{\partial
  x} \avg{\varphi z}\frac{\avg{\varphi u}}{\avg{\varphi}} = \avg{\varphi w},\label{eq:phi4}
\end{subnumcases}
but it remains to find the closure relation for the
non-hydrostatic pressure terms. As proved in the following
proposition, the only possible choice is
\begin{equation}
\left. p_{nh}\right|_b = 2 \frac{\avg{\varphi p_{nh}}}{\avg{\varphi}}.
\label{eq:choice_pb}
\end{equation}

\begin{proposition}
The solutions of the
Euler system~\eqref{eq:euler}-\eqref{eq:ps},\eqref{eq:bottom},\eqref{eq:free_surf} satisfying the
closure relations~\eqref{eq:closure},\eqref{eq:choice_pb} are also
solutions of the system
\begin{subnumcases}{\label{eq:euler_av11}}
  \frac{\partial}{\partial t} \avg{\varphi } + \frac{\partial}{\partial x} \avg{\varphi u} = 0 , \label{eq:av111}\\
  \frac{\partial}{\partial t} \avg{\varphi u} +
  \frac{\partial}{\partial x} \left( \frac{\avg{\varphi u}^2}{\avg{\varphi}} +
    g\avg{\varphi (\eta-z)} + \avg{\varphi p_{nh}}\right)  =
  \nonumber\\
\hspace*{6.5cm} -\left(g \avg{\varphi} + 2 \frac{\avg{\varphi p_{nh}}}{\avg{\varphi}}\right)
  \frac{\partial z_b}{\partial x} ,\label{eq:av222} \\
  \frac{\partial}{\partial t} \avg{\varphi w} +
  \frac{\partial}{\partial x} \avg{\varphi w}\frac{\avg{\varphi u}}{\avg{\varphi}} =
  2 \frac{\avg{\varphi p_{nh}}}{\avg{\varphi}},\label{eq:av333}\\
\frac{\partial}{\partial t} \avg{\varphi z} + \frac{\partial}{\partial
  x} \avg{\varphi z}\frac{\avg{\varphi u}}{\avg{\varphi}} = \avg{\varphi w}.\label{eq:av444}
\end{subnumcases}
This system is a depth-averaged approximation of the
Euler system and
admits~-- for smooth solutions~-- an energy balance
under the form
\begin{equation}
\begin{split}
  \frac{\partial}{\partial t} \avg{\varphi E
    \left(z;\frac{\avg{\varphi u}}{\avg{\varphi}},\frac{\avg{\varphi
          w}}{\avg{\varphi}}\right)}& \\
 & \hspace*{-1cm} +\frac{\partial}{\partial x} \avg{\frac{\avg{\varphi
        u}}{\avg{\varphi}} \left( \varphi E \left(z;\frac{\avg{\varphi
      u}}{\avg{\varphi}},\frac{\avg{\varphi w}}{\avg{\varphi}}\right)
+\avg{\varphi p_{nh}} \right)} = 0.
\end{split}
\label{eq:energy_av_fin}
\end{equation}
\label{prop:prop_ad}
\end{proposition}
\begin{remark}
It is important to notice that whereas the solution $H,u,w,p$ of the
Euler system~\eqref{eq:euler}-\eqref{eq:ps},\eqref{eq:bottom},\eqref{eq:free_surf} also satisfies the
system~\eqref{eq:euler_av1},
only the solutions $H,u,w,p$ of the
Euler system~\eqref{eq:euler}-\eqref{eq:ps},\eqref{eq:bottom},\eqref{eq:free_surf} satisfying the
closure relations~\eqref{eq:closure},\eqref{eq:choice_pb} are also
solutions of the system~\eqref{eq:euler_av11}-\eqref{eq:energy_av_fin}. On the
contrary, any solutions $\avg{\varphi}$, $\avg{\varphi u}$,
$\avg{\varphi w}$ and $\avg{p_{nh}}$
of~\eqref{eq:euler_av11}-\eqref{eq:av444} with $\left. p_{nh}\right|_b$
defined by~\eqref{eq:choice_pb} are also
solutions of~\eqref{eq:euler_av1}-\eqref{eq:energy_av}.
\end{remark}

\begin{proof}[Proof of prop.~\ref{prop:prop_ad}]
Only the manipulations allowing to obtain~\eqref{eq:energy_av_fin}
have to be detailed. More precisely, we have to prove that, in~\eqref{eq:euler_av_phi}, the
relation~\eqref{eq:choice_pb} is needed in order to obtain~\eqref{eq:energy_av_fin}.

For that purpose, we multiply~\eqref{eq:phi2} by $\frac{\avg{\varphi
    u}}{\avg{\varphi}}$ and we rewrite each of the obtained terms. For
the terms also appearing in the Saint-Venant system i.e. corresponding to
the hydrostatic part of the model, we easily obtain
\begin{equation}
\begin{split}
\left( \frac{\partial}{\partial t}\avg{\varphi u} + \frac{\partial}{\partial
  x} \left( \frac{\avg{\varphi u}^2}{\avg{\varphi}} + g
  \avg{\varphi (\eta-z)}\right) + g \avg{\varphi} \frac{\partial
  z_b}{\partial x}\right) \frac{\avg{\varphi u}}{\avg{\varphi}} = & \\
& \hspace*{-7cm} \frac{\partial}{\partial t} \avg{\varphi E \left(z;\frac{\avg{\varphi u}}{\avg{\varphi}},0\right)}+
  \frac{\partial}{\partial x} \avg{\frac{\avg{\varphi
        u}}{\avg{\varphi}} \varphi E \left(z;\frac{\avg{\varphi
          u}}{\avg{\varphi}},0\right)}.
\end{split}
\label{eq:energ_eq1}
\end{equation}
Multiplying~\eqref{eq:phi3} by $\frac{\avg{\varphi w}}{\avg{\varphi}}$
and using~\eqref{eq:phi1}, we obtain the relation
\begin{equation}
\frac{\partial}{\partial
    t}\frac{\avg{\varphi w}^2}{2\avg{\varphi}}  + \frac{\partial}{\partial
    x}\frac{\avg{\varphi u}\avg{\varphi w}^2}{2\avg{\varphi}^2} = \frac{\avg{\varphi w}}{\avg{\varphi}} \left. p_{nh}\right|_b.
\label{eq:energ_eq2}
\end{equation}
And for the contribution of the non-hydrostatic pressure terms of Eq.~\eqref{eq:phi2} over the
energy balance, it comes
\begin{eqnarray}
\left( \frac{\partial}{\partial x}\avg{\varphi p_{nh}} +
  \left. p_{nh}\right|_b\frac{\partial z_b}{\partial x} \right) \frac{\avg{\varphi u}}{\avg{\varphi}}
& = & \frac{\partial}{\partial
x} \frac{\avg{\varphi p_{nh}}\avg{\varphi
    u}}{\avg{\varphi}} - \avg{\varphi
p_{nh}}\frac{\partial}{\partial x} \frac{\avg{\varphi
  u}}{\avg{\varphi}} \nonumber\\
& & + \left. p_{nh} \right|_b \frac{\avg{\varphi u}}{\avg{\varphi}}\frac{\partial z_b}{\partial x} \nonumber\\
& = & \frac{\partial}{\partial
x} \frac{\avg{\varphi p_{nh}}\avg{\varphi
    u}}{\avg{\varphi}} - \frac{\avg{\varphi
p_{nh}}}{\avg{\varphi}}\frac{\partial \avg{\varphi u}}{\partial x}
\nonumber\\
& & + \frac{\avg{\varphi
p_{nh}}\avg{\varphi u}}{\avg{\varphi}^2}\frac{\partial
\avg{\varphi}}{\partial x} \nonumber\\
& &  + \left. p_{nh} \right|_b \frac{\avg{\varphi
    u}}{\avg{\varphi}}\frac{\partial z_b}{\partial x}.
\label{eq:energ_pnh}
\end{eqnarray}
Since the identity
$$\avg{\varphi z} = \frac{\avg{\varphi}}{2} \left( \avg{\varphi} + 2z_b\right),$$
holds, relation~\eqref{eq:phi4} coupled with~\eqref{eq:phi1} reduces to
\begin{equation}
\avg{\varphi w} = - \frac{\avg{\varphi}}{2}\frac{\partial \avg{\varphi
  u}}{\partial x} + \frac{\avg{\varphi u}}{2}\frac{\partial (\avg{\varphi} + 2z_b)}{\partial x},
\label{eq:wbar_varphi}
\end{equation}
and we can rewrite~\eqref{eq:energ_pnh} under the form
\begin{equation}
\begin{split}
\left( \frac{\partial}{\partial x}\avg{\varphi p_{nh}} +
  \left. p_{nh}\right|_b\frac{\partial z_b}{\partial x} \right)
\frac{\avg{\varphi u}}{\avg{\varphi}} = & \frac{\partial}{\partial
x} \frac{\avg{\varphi p_{nh}}\avg{\varphi
    u}}{\avg{\varphi}} + 2\frac{\avg{\varphi
p_{nh}}}{\avg{\varphi}^2}\avg{\varphi w} \\
&  + \left( \left. p_{nh} \right|_b - 2\frac{\avg{\varphi p_{nh}}}{\avg{\varphi}}\right)\frac{\avg{\varphi
    u}}{\avg{\varphi}}\frac{\partial z_b}{\partial x}.
\end{split}
\label{eq:energ_eq3}
\end{equation}
Adding~\eqref{eq:energ_eq1},\eqref{eq:energ_eq2}
and~\eqref{eq:energ_eq3} gives
\begin{equation}
\begin{split}
\frac{\partial}{\partial t} \avg{\varphi E \left(z;\frac{\avg{\varphi u}}{\avg{\varphi}},\frac{\avg{\varphi w}}{\avg{\varphi}}\right)}+&
  \frac{\partial}{\partial x} \avg{\frac{\avg{\varphi
        u}}{\avg{\varphi}} \left( \varphi E \left(z;\frac{\avg{\varphi
      u}}{\avg{\varphi}},\frac{\avg{\varphi w}}{\avg{\varphi}}\right) +\avg{\varphi p_{nh}} \right)} \\
& = \left( \left. p_{nh} \right|_b - 2\frac{\avg{\varphi
      p_{nh}}}{\avg{\varphi}}\right) \left( \frac{
    \avg{\varphi w}}{\avg{\varphi}} + \frac{\avg{\varphi
      u}}{\avg{\varphi}}\frac{\partial z_b}{\partial x}\right).
\end{split}
\label{eq:energy_av_fin_int}
\end{equation}
Using~\eqref{eq:wbar_varphi} we have
$$ \frac{
    \avg{\varphi w}}{\avg{\varphi}} + \frac{\avg{\varphi
      u}}{\avg{\varphi}}\frac{\partial z_b}{\partial x} = -\frac{1}{2}\frac{\partial
    \avg{\varphi u}}{\partial x} + \frac{\avg{\varphi
      u}}{2\avg{\varphi}}\frac{\partial \avg{\varphi}}{\partial
    x} = -\frac{\avg{\varphi}}{2} \frac{\partial}{\partial x} \left(
  \frac{\avg{\varphi u}}{\avg{\varphi}}\right),$$
and therefore the right hand side of~\eqref{eq:energy_av_fin_int} vanishes iff~\eqref{eq:choice_pb} holds
that concludes the proof.
\end{proof}

\subsection{The proposed non-hydrostatic averaged model and other writings}
\label{subsec:model_nh}

In the following, we no more handle variables corresponding to vertical means of the
solution of the Euler
equations~\eqref{eq:euler}. We adopt the
notation $\overline{f}=f(x,t)$.
By analogy with~\eqref{eq:euler_av11}-\eqref{eq:energy_av_fin}, we consider
as non-hydrostatic averaged model the following system
\begin{subnumcases}{\label{eq:model_proppos}}
\frac{\partial H}{\partial t} + \frac{\partial}{\partial x} \bigl(H\overline{u}\bigr) = 0, \label{eq:euler_11}\\
\frac{\partial}{\partial t}(H\overline{u}) + \frac{\partial}{\partial x}\left(H \overline{u}^2
+ \frac{g}{2}H^2 + H\overline{p}_{nh}\right) = - (gH +
2 \overline{p}_{nh})\frac{\partial
  z_b}{\partial x},\label{eq:euler_22}\\
\frac{\partial}{\partial t}(H\overline{w})  + \frac{\partial}{\partial
  x}(H\overline{w}\overline{u}) = 2 \overline{p}_{nh},\label{eq:euler_33}\\
\frac{\partial}{\partial t}\left(\frac{\eta^2 - z_b^2}{2}\right)  + \frac{\partial}{\partial x}\left(\frac{\eta^2-z_b^2}{2}\overline{u}\right) = H \overline{w}.\label{eq:euler_44}
\end{subnumcases}
The smooth solutions $H$, $\overline{u}$, $\overline{w}$, $\overline{p}_{nh}$
of the system~\eqref{eq:model_proppos} also satisfies the energy balance
\begin{equation}
\frac{\partial \overline{E}}{\partial t} + \frac{\partial}{\partial x}
\left(\overline{u}\bigl(\overline{E}+\frac{g}{2}H^2 + H\overline{p}_{nh}\bigr)\right) = 0,
\label{eq:euler_55}
\end{equation}
where
\begin{equation}
\overline{E}=\frac{H(\overline{u}^2+\overline{w}^2)}{2}+ \frac{gH(\eta+z_b)}{2}.
\label{eq:Ebar}
\end{equation}
Notice that simple manipulations of
Eqs.~\eqref{eq:model_proppos} lead to the relation
\begin{equation}
H \overline{w} = - \frac{H}{2}\frac{\partial (H\overline{u})}{\partial x} +
\frac{H \overline{u}}{2}\frac{\partial (H+2z_b)}{\partial x},
\label{eq:wbar1}
\end{equation}
corresponding to a shallow water expression of the divergence free
condition.

The system~\eqref{eq:model_proppos}-\eqref{eq:euler_55} has been obtained
by one of the authors in~\cite{JSM_M3AS} but in the framework of
asymptotic expansion. In this case, the justification of the closure
relations is less obvious than using the energy-based optimality criterion~\eqref{eq:minSW1}.

%\subsection{Other formulations}

Simple manipulations in the equations
of~\eqref{eq:model_proppos} lead to different
formulations of the model which are given in the two following corollaries.
\begin{corollary}
The system \eqref{eq:model_proppos} can be rewritten
under the form
\begin{subnumcases}{\label{eq:model_av_final}}
\frac{\partial H}{\partial t} + \frac{\partial}{\partial x} \bigl(H\overline{u}\bigr) = 0,\label{eq:model_av_final1}\\
\frac{\partial}{\partial t}(H\overline{u}) + \frac{\partial}{\partial
  x} \left( H \overline{u}^2 + \frac{g}{2}H^2 + H\overline{p}_{nh}
\right) = - (gH + 2\overline{p}_{nh})\frac{\partial z_b}{\partial x},\label{eq:model_av_final2}\\
\frac{\partial}{\partial t}\left(H\overline{w}\right)  +
\frac{\partial}{\partial
  x}\left(H\overline{w}\, \overline{u}\right) = 2\overline{p}_{nh},\label{eq:model_av_final3}\\
H \overline{w} = - \frac{H}{2}\frac{\partial (H\overline{u})}{\partial x} +
\frac{H \overline{u}}{2}\frac{\partial (H+2z_b)}{\partial x},\label{eq:model_av_final4}
\end{subnumcases}
and for smooth solutions Eq.~\eqref{eq:euler_55} remains valid.
%The equation~\eqref{eq:model_av_final4} is a shallow water expression of the divergence free condition.
\label{prop:eul1}
\end{corollary}
\begin{corollary}
The system \eqref{eq:model_proppos} can be rewritten
under the form
\begin{subnumcases}{\label{eq:eulerbis}}
\frac{\partial H}{\partial t} + \frac{\partial}{\partial x} \bigl(H\overline{u}\bigr) = 0,\label{eq:eulerbis_1}\\
\frac{\partial}{\partial t}(H\overline{u}) + \frac{\partial}{\partial
  x} \left( H \overline{u}^2 + \frac{g}{2}H^2 + H\overline{p}_{nh}
\right) = - (gH + 2\overline{p}_{nh})\frac{\partial z_b}{\partial x},\label{eq:eulerbis_2}\\
\frac{\partial}{\partial t}\left(\frac{H^2}{2}\overline{w}\right)  +
\frac{\partial}{\partial
  x}\left(\frac{H^2}{2}\overline{w}\, \overline{u}\right) = H\overline{p}_{nh}
+ H\overline{w}^2  -H\overline{u}\, \overline{w}\frac{\partial
  z_b}{\partial x},\label{eq:eulerbis_3}\\
\frac{\partial}{\partial t}\left(\frac{H^2}{2}\right)  + \frac{\partial}{\partial x}\left(\frac{H^2}{2}\overline{u}\right) = H \overline{w} -H\overline{u}\frac{\partial
  z_b}{\partial x},\label{eq:eulerbis_4}
\end{subnumcases}
and for smooth solutions Eq.~\eqref{eq:euler_55} remains valid.
\label{prop:eulbis}
\end{corollary}

\begin{corollary}
The system \eqref{eq:model_proppos} can be rewritten
under the form
\begin{subnumcases}{\label{eq:eulerter}}
\frac{\partial H}{\partial t} + \frac{\partial}{\partial x} \bigl(H\overline{u}\bigr) = 0,\label{eq:eulerter_1}\\
\frac{\partial}{\partial t}(H\overline{u}) + \frac{\partial}{\partial x}(H \overline{u}^2) +
\frac{\partial}{\partial x}\left(H\overline{p}\right) = - 2\overline{p}\frac{\partial
  z_b}{\partial x},\label{eq:eulerter_2}\\
\frac{\partial}{\partial t}\left(\frac{\eta^2 - z_b^2}{2}\overline{w}\right)  +
\frac{\partial}{\partial
  x}\left(\frac{\eta^2 - z_b^2}{2}\overline{w}\, \overline{u}\right) = (H+2z_b)\overline{p}
+ H\overline{w}^2 - g \frac{\eta^2 - z_b^2}{2},\label{eq:eulerter_3}\\
\frac{\partial}{\partial t}\left(\frac{\eta^2 - z_b^2}{2}\right)  + \frac{\partial}{\partial x}\left(\frac{\eta^2 - z_b^2}{2}\overline{u}\right) = H \overline{w},\label{eq:eulerter_4}
\end{subnumcases}
and for smooth solutions Eq.~\eqref{eq:euler_55} remains valid.
\label{prop:eulter}
\end{corollary}
\begin{proof}[Proofs of corollaries~\ref{prop:eul1},~\ref{prop:eulbis} and~\ref{prop:eulter}]
Equation~\eqref{eq:eulerbis_3} can be obtained
multiplying\break Eq.~\eqref{eq:euler_33} by $\frac{H}{2}$ and
using~\eqref{eq:wbar1} and simple manipulations allow to
obtain~\eqref{eq:eulerbis_4} from~\eqref{eq:euler_44}.
Equation~\eqref{eq:eulerter_3} can be obtained
multiplying Eq.~\eqref{eq:euler_33} by $\frac{H+2z_b}{2}$ and
using~\eqref{eq:wbar1}.
\end{proof}

\red{\begin{remark}
When considering the bottom $z_b$ can vary w.r.t. time $t$, the
system~\eqref{eq:model_proppos} remains unchanged only the energy balance~\eqref{eq:euler_55} is
modified and becomes
\begin{equation}
\frac{\partial \overline{E}}{\partial t} + \frac{\partial}{\partial x}
\left(\overline{u}\bigl(\overline{E}+\frac{g}{2}H^2 +
  H\overline{p}_{nh}\bigr)\right) =
(gH+2\overline{p}_{nh})\frac{\partial z_b}{\partial t},
\label{eq:energ_zb_t}
\end{equation}
with $\overline{E}$ defined by~\eqref{eq:Ebar}. Since
$\left.\overline{p}\right|_b = gH+2\overline{p}_{nh}$, the contributions of
the time variations of $z_b$ in Eq.~\eqref{eq:energ_zb_t} are consistent with those
appearing in~\eqref{eq:energy_eq_euler}.
\label{rem:zb_t}
\end{remark}}

\subsection{About asymptotic expansion}

For shallow water flows, the model derivation is often carried out
using the shallow water assumption. Indeed, introducing
the small parameter
$$\varepsilon = \frac{h}{\lambda},$$
where $h$ and $\lambda$, two characteristic dimensions along the $z$
and $x$ axis respectively, an asymptotic expansion of the Euler or
Navier-Stokes system leads to simplified averaged models such as the
Saint-Venant system. As in~\cite{gerbeau,decoene,marche,JSM_M3AS} and
neglecting the viscous and friction effects, the
shallow water assumption allows to justify the estimate
\begin{equation}
u = \overline{u} + \mathcal{O}(\varepsilon^2),
\label{eq:ubar}
\end{equation}
leading, using the divergence free condition, to
\begin{equation}
w = -(z-z_b)\frac{\partial \overline{u}}{\partial x} +
\overline{u}\frac{\partial z_b}{\partial x} + \mathcal{O}(\varepsilon^2).
\label{eq:westim}
\end{equation}
Inserting~\eqref{eq:ubar} and~\eqref{eq:westim} in the momentum
equation~\eqref{eq:eul_2d3} implies that the non-hydrostatic part of the
pressure is linear in the variable $z$
$$\frac{\partial p_{nh}}{\partial z} = \alpha(x,t)(z-z_b) + \beta(x,t) + \mathcal{O}(\varepsilon^2).$$
Unfortunately, the preceding relation is not compatible with the closure relation
for the pressure~\eqref{eq:choice_pb}. And it is then necessary to add
a scaling coefficient over the non-hydrostatic pressure terms in order
to ensure the existence of an energy balance.

Notice that the energy balance obtained using the rescaled
non-hydrostatic pressure terms differ from~\eqref{eq:energy_av_fin} and~\eqref{eq:euler_55}. The
Green-Naghdi~\cite{green} can be derived using such an asymptotic expansion strategy.

\subsection{Comparison with Green-Naghdi model}
\label{subsec:gn}

One of the most popular models for the description of long, dispersive
water waves is the Green-Naghdi model. Several
derivations of the Green-Naghdi model have been proposed in the
litterature~\cite{green,green1,su,miles}. For the mathematical justification of the model, the reader can refer
to~\cite{lannes,makarenko} and for its numerical approximation to~\cite{lemetayer,chazel1,chazel2,JSM_CF}.

Following~\cite{lemetayer} and with $z_b=cst$, the Green-Naghdi model reads
\begin{subnumcases}{\label{eq:gn}}
\frac{\partial H}{\partial t} + \frac{\partial}{\partial x} \bigl(H\overline{u}\bigr) = 0,\label{eq:gn_1}\\
\frac{\partial (H\overline{u})}{\partial t} +
\frac{\partial}{\partial x} \left( H\overline{u}^2 +
  \frac{g}{2}H^2 + H \overline{p}_{gn}\right) = 0,\label{eq:gn_2}
\end{subnumcases}
with $\overline{p}_{gn} = \frac{1}{3}H \ddot{H}$ and the ``dot'' notation means
the material derivative
\begin{equation}
\dot{H} = \frac{\partial H}{\partial t} + \overline{u} \frac{\partial
  H}{\partial x}.
\label{eq:mat_deriv}
\end{equation}

When $z_b = cst$, the Green-Naghdi model and the non-hydrostatic model~\eqref{eq:model_proppos}
are identical up to a multiplicative constant for the non-hydrostatic pressure. Indeed starting from the expression of
$\overline{p}_{gn}$, the relations~\eqref{eq:gn_1} and
\eqref{eq:mat_deriv} give
\begin{equation*}
\begin{split}
\overline{p}_{gn} = & \frac{1}{3}H \left( \frac{\partial
    \dot{H}}{\partial t} + \overline{u}\frac{\partial
    \dot{H}}{\partial x} \right)\\
 = & \frac{1}{3}H \left( \frac{\partial}{\partial t}\left(
    -H\frac{\partial \overline{u}}{\partial x}\right) + \overline{u}\frac{\partial}{\partial x}\left(
    -H\frac{\partial \overline{u}}{\partial x}\right)\right).
\end{split}
\end{equation*}
If we denote, as in~\eqref{eq:wbar1}
\begin{equation}
\overline{w} = -\frac{H}{2} \frac{\partial \overline{u}}{\partial x},
\label{eq:div_gn}
\end{equation}
it comes
$$\overline{p}_{gn} = \frac{2}{3}H \left( \frac{\partial \overline{w}}{\partial t} +
  \overline{u}\frac{\partial \overline{w}}{\partial x}\right) = \frac{2}{3}\left(
  \frac{\partial}{\partial t} (H \overline{w}) +
  \frac{\partial }{\partial x} (H\overline{u}\overline{w})\right)
%= \frac{2}{3} \overline{p}_{nh}
.$$
Therefore, the Green-Naghdi can also be written under the form
\begin{subnumcases}{\label{eq:gn3}}
\frac{\partial H}{\partial t} + \frac{\partial}{\partial x} \bigl(H\overline{u}\bigr) = 0,\label{eq:gn_11}\\
\frac{\partial (H\overline{u})}{\partial t} +
\frac{\partial}{\partial x} \left( H\overline{u}^2 +
  \frac{g}{2}H^2 + H \overline{p}_{gn}\right) = 0,\label{eq:gn_22}\\
  \frac{\partial}{\partial t} (H \overline{w}) +
  \frac{\partial }{\partial x} (H\overline{u}\overline{w})= \frac{3}{2}\overline{p}_{gn}\label{eq:gn_33},
\end{subnumcases}
with the constraint~\eqref{eq:div_gn} and completed, for smooth solutions, by the energy balance
\begin{equation}
\frac{\partial \overline{E}_{gn}}{\partial t} + \frac{\partial}{\partial x} \overline{u}
\left( \overline{E}_{gn} + H\overline{p}_{gn}\right) = 0,
\label{eq:energy_gn}
\end{equation}
with
\begin{equation}
\overline{E}_{gn} = \frac{H}{2}\left(\overline{u}^2 +
  \frac{2}{3}\overline{w}^2 \right) + \frac{g}{2}H^2.
\label{eq:energy_exp_gn}
\end{equation}
The energy balance~\eqref{eq:energy_gn} illustrates the
main difference between the Green-Nagdhi model and the proposed
non-hydrostatic model~\eqref{eq:model_proppos}-\eqref{eq:euler_55}. In
the case of a flat bottom, \eqref{eq:Ebar}
and~\eqref{eq:energy_exp_gn} only differ by the coefficient $\frac{2}{3}$ in the vertical part of the kinetic energy.

%Because of the coefficient $\frac{2}{3}$,
%the vertical part of the kinetic energy in~\eqref{eq:Ebar}
%and~\eqref{eq:energy_exp_gn} differ.

\red{To summarize, for flat bottom, choosing either $\gamma=2$ or $\gamma=\frac{3}{2}$, the system
\begin{subnumcases}{\label{eq:general}}
\frac{\partial H}{\partial t} + \frac{\partial}{\partial x} \bigl(H\overline{u}\bigr) = 0,\label{eq:general1}\\
\frac{\partial (H\overline{u})}{\partial t} +
\frac{\partial}{\partial x} \left( H\overline{u}^2 +
  \frac{g}{2}H^2 + H \overline{p}\right) = 0,\label{eq:general2}\\
  \frac{\partial}{\partial t} (H \overline{w}) +
  \frac{\partial }{\partial x} (H\overline{u}\overline{w})=
  \gamma\overline{p}\label{eq:general3},\\
\overline{w} = -\frac{H}{2} \frac{\partial \overline{u}}{\partial x},\label{eq:general4}
\end{subnumcases}
corresponds to the depth-averaged system~\eqref{eq:model_proppos} or to the
Green-Naghdi
system~\eqref{eq:div_gn}-\eqref{eq:gn3}, respectively. The system~\eqref{eq:general}
is completed with the energy balance
\begin{equation}
\frac{\partial \overline{E}_{\gamma}}{\partial t} + \frac{\partial}{\partial x} \overline{u}
\left( \overline{E}_{\gamma} + H\overline{p}\right) = 0,
\label{eq:energy_gamma}
\end{equation}
with
\begin{equation}
\overline{E}_{\gamma} = \frac{H}{2}\left(\overline{u}^2 +
  \frac{1}{\gamma}\overline{w}^2 \right) + \frac{g}{2}H^2.
\label{eq:energy_exp_gamma}
\end{equation}
}

Despite its similarities with the Green-Naghdi model, the
non-hydrostatic model\break \eqref{eq:model_proppos}-\eqref{eq:euler_55} has several
advantages
\begin{itemize}
\item its derivation is more simple than the Green-Naghdi model
  (see~\cite{green,green1}),
\item the topography source terms appear quite naturally (that is not
  the case for most of the versions available in the
  literature~\cite{camassa,nadiga}),
\item the model formulation is written under the form of an
  advection-reaction set of PDE and does not contain high order derivatives.
%\item based on projection-correction schemes, efficient numerical
%  methods can be derived for its approximation, see Section~\ref{sec:num}.
\end{itemize}
\red{A comparison between the solutions of the two non-hydrostatic models is obviously
  a key point but it requires a numerical scheme for their
  discretization that is not in the scope of this paper. We illustrate in
  paragraphs~\ref{subsec:thacker_nh} and~\ref{subsec:solitary} the
  differences between the two non-hydrostatic models in the case of
  analytical solutions.}

%%%%%%%%%%%%%%%%%%%%%%%%%%%%%
\subsection{Hydrostatic case}
\label{subsec:hydro}

The process used for the derivation of the non-hydrostatic model in
paragraph~\ref{subsec:depth-averaging} can also be used for the
derivation of shallow water hydrostatic models.

The hydrostatic assumption in~\eqref{eq:euler} that means that the contribution of the vertical acceleration in the pressure $p$ can be neglected, leads to the classical model
\begin{subnumcases}{\label{eq:eul_hyd}}
\frac{\partial \varphi}{\partial t} + \frac{\partial \varphi
  u}{\partial x} + \frac{\partial \varphi w}{\partial z} = 0,\label{eq:eul_hyd1}\\
\frac{\partial u}{\partial t} + \frac{\partial u^2}{\partial x} + \frac{\partial uw}{\partial z} +  \frac{\partial p}{\partial x} = 0,\label{eq:eul_hyd2}\\
\frac{\partial p}{\partial z} = -g. \label{eq:eul_hyd3}
\end{subnumcases}
This hydrostatic model --~or some variants with horizontal and
vertical viscosity or other specific terms~-- is often used in
geophysical flows studies and it has been widely studied, let us mention some
important contributions \cite{brenier,grenier,masmoudi}.

Starting from Eqs.~\eqref{eq:eul_hyd}, the
shallow water assumption allows to derive the classical Saint-Venant system (see also \cite{saleri,gerbeau,marche})
\begin{subnumcases}{\label{eq:sv}}
\frac{\partial H}{\partial t} + \frac{\partial}{\partial x} \bigl(H\overline{u}\bigr) = 0,\label{eq:sv_1}\\
\frac{\partial (H\overline{u})}{\partial t} + \frac{\partial (H\overline{u}^2)}{\partial x} + \frac{g}{2}\frac{\partial H^2}{\partial x} = -gH \frac{\partial z_b}{\partial x}.\label{eq:sv_2}
\end{subnumcases}
The smooth solutions of \eqref{eq:sv}
satisfy the energy equality
\begin{eqnarray}
\frac{\partial E_h}{\partial t} + \frac{\partial}{\partial x}
\left(\overline{u}\bigl(E_h+g\frac{H^2}{2}\bigr)\right) = 0,\label{eq:sv_3}
\end{eqnarray}
with the energy
\begin{equation}
E_h = \frac{H\overline{u}^2}{2}+\frac{gH(\eta+z_b)}{2}.
\label{eq:sv_4}
\end{equation}
Notice that (\ref{eq:sv_3}),(\ref{eq:sv_4}) corresponds to
(\ref{eq:energy_exp}),(\ref{eq:energy_eq_euler}) where the hydrostatic and
shallow water assumptions are made.

\subsection{A depth-averaged Navier-Stokes system}
\label{sec:NS_av}

In Section~\ref{sec:av_euler}, we have started from the Euler system to
obtain its depth-averaged version. In this section, we use the same
process as in
paragraphs~\ref{sec:av_euler}
to obtain a depth-averaged Navier-Stokes system. And we have the
following proposition
\begin{proposition}
A depth-averaged version of the free surface Navier-Stokes system leads to the model
\begin{subnumcases}{\label{eq:NS}}
\frac{\partial H}{\partial t} + \frac{\partial}{\partial x} \bigl(H\overline{u}\bigr) = 0, \label{eq:NS_11}\\
\frac{\partial}{\partial t}(H\overline{u}) + \frac{\partial}{\partial x}\left(H \overline{u}^2
+ \frac{g}{2}H^2 + H\overline{p}_{nh}\right) = \nonumber\\
\hspace*{4cm} - (gH +
2 \overline{p}_{nh})\frac{\partial
  z_b}{\partial x} + \frac{\partial}{\partial x}\left(
  2\mu H\frac{\partial \overline{u}}{\partial x}\right) - \kappa \overline{u},\label{eq:NS_22}\\
\frac{\partial}{\partial t}(H\overline{w})  + \frac{\partial}{\partial
  x}(H\overline{w}\overline{u}) = 2 \overline{p}_{nh} +
\frac{\partial}{\partial x}\left(\mu
  H\frac{\partial \overline{w}}{\partial x}\right),\label{eq:NS_33}\\
\frac{\partial}{\partial t}\left(\frac{\eta^2 - z_b^2}{2}\right)  + \frac{\partial}{\partial x}\left(\frac{\eta^2-z_b^2}{2}\overline{u}\right) = H \overline{w}.\label{eq:NS_44}
\end{subnumcases}
Moreover the smooth solutions of
\eqref{eq:NS} satisfy the energy
balance
\begin{equation}
\begin{split}
\frac{\partial \overline{E}}{\partial t} + \frac{\partial}{\partial x}
\left(\overline{u}\left(\overline{E}+\frac{g}{2}H^2 + H\overline{p}_{nh} - 2\mu H
  \frac{\partial \overline{u}}{\partial x}\right) - \mu H
  \overline{w}\frac{\partial \overline{w}}{\partial x}\right) & \\
& \hspace*{-4cm} =  -\mu H \left(
 2 \left(\frac{\partial \overline{u}}{\partial x} \right)^2 +
  \left(\frac{\partial \overline{w}}{\partial x} \right)^2\right) -
\kappa \overline{u}^2,
\end{split}
\label{eq:NS_7}
\end{equation}
with $\overline{E}$ defined by~\eqref{eq:Ebar}.
\label{prop:NS1}
\end{proposition}

\begin{proof}[Proof of proposition~\ref{prop:NS1}]
Compared to the derivation of the model~\eqref{eq:model_proppos}-\eqref{eq:euler_55}, only the treatment of the viscous
terms has to be precised and we have
$$
\int \left(\frac{\partial \Sigma_{xx}}{\partial x} + \frac{\partial
  \Sigma_{xz}}{\partial z}\right) \varphi dz =
\frac{\partial}{\partial x}\int 2\mu \frac{\partial u}{\partial x}
\varphi dz - \kappa \overline{u},$$
where the boundary conditions~\eqref{eq:BC2},\eqref{eq:BC_z_b} have
been used. And replacing $u$ by $\overline{u}$ in the r.h.s. of the
preceding relation gives the expression of the viscous term
in~\eqref{eq:NS_22}. Likewise, using~\eqref{eq:BC2},\eqref{eq:BC_z_b},
we have
$$
\int \left(\frac{\partial \Sigma_{zx}}{\partial x} + \frac{\partial
  \Sigma_{zz}}{\partial z}\right) \varphi dz =
\frac{\partial}{\partial x}\int \mu \frac{\partial w}{\partial x}
\varphi dz,$$
and replacing $w$ by $\overline{w}$ gives the expression of the viscous term
in~\eqref{eq:NS_33}. Multiplying~\eqref{eq:NS_22} by $\overline{u}$
and~\eqref{eq:NS_33} by $\overline{w}$ and after simple manipulations,
we obtain the relation~\eqref{eq:NS_7} that completes the proof.
\end{proof}

%%%%%%%%%%%%%%%%%%%%%%%%%%%%%%%%%%%%%%%%%%%%%%%%
\section{Some properties of the non-hydrostatic model}
%%%%%%%%%%%%%%%%%%%%%%%%%%%%%%%%%%%%%%%%%%%%%%%%
\label{sec:properties}

\subsection{Expression for $\overline{p}_{nh}$}

Equation~\eqref{eq:euler_44}~-- that is equivalent
to~\eqref{eq:wbar1}~-- is not a dynamical equation but a constraint
ensuring a shallow water version of the divergence free
condition. And hence it plays a specific role in the non-hydrostatic
model. We try to reformulate Eq.~\eqref{eq:wbar1} in order to obtain an
equation satisfied by the pressure $\overline{p}_{nh}$. The process
used is similar to Chorin solenoidal decomposition of the velocity
field \cite{chorin} for Navier-Stokes equations.
%This process will be also used at the discrete level, see Section~\ref{sec:num}.

The derivative w.r.t. time $t$ of the shallow water form of the divergence free condition~\eqref{eq:wbar1}
gives
$$\frac{\partial (H\overline{w})}{\partial t} + \frac{H}{2}\frac{\partial^2
  (H\overline{u})}{\partial x \partial t} - \frac{1}{2}\frac{\partial
  (H+2z_b)}{\partial x}\frac{\partial
  (H\overline{u})}{\partial t} = - \frac{H\overline{u}}{2}\frac{\partial^2
  (H\overline{u})}{\partial x^2} + \frac{1}{2} \left(\frac{\partial
  (H\overline{u})}{\partial x}\right)^2,$$
where relation~\eqref{eq:euler_11} has been used. Now substituting the expressions~\eqref{eq:euler_22},\eqref{eq:euler_33} for
$$\frac{\partial (H\overline{u})}{\partial t} ,\quad\mbox{and}\quad \frac{\partial (H\overline{w})}{\partial t},$$
in the previous relation gives
\begin{equation}
2 \overline{p}_{nh} + \left( \frac{1}{2}\frac{\partial
  \left(H+2z_b \right)}{\partial x} - \frac{H}{2}\frac{\partial}{\partial x}\right)\left(\frac{\partial
  (H\overline{p}_{nh})}{\partial x}+2 \overline{p}_{nh} \frac{\partial z_b}{\partial x}\right) = B,
\label{eq:d2p}
\end{equation}
with
\begin{eqnarray*}
B & = & \frac{1}{2} \left(\frac{\partial
  (H\overline{u})}{\partial x}\right)^2 - \frac{H\overline{u}}{2}\frac{\partial^2
  (H\overline{u})}{\partial x^2} + \frac{H}{2}\left(
  \frac{\partial^2}{\partial x^2} \left(H \overline{u}^2 + \frac{g}{2}H^2\right) + g
\frac{\partial}{\partial x}\left(H\frac{\partial z_b}{\partial
  x}\right) \right) \\
& & + \frac{\partial
  (H\overline{w}\overline{u})}{\partial x} - \frac{1}{2}\frac{\partial
  \left(H+2z_b\right)}{\partial x}\left( \frac{\partial}{\partial
    x}\left(H\overline{u}^2 + \frac{g}{2}H^2\right)
  +gH\frac{\partial z_b}{\partial x}\right).
\end{eqnarray*}
From~\eqref{eq:wbar1}, we get
\begin{eqnarray*}
\frac{\partial (H\overline{w}\overline{u})}{\partial x} & = & -\frac{1}{2}
\frac{\partial}{\partial x} \left( H\overline{u}\frac{\partial
    (H\overline{u})}{\partial x}\right) + \frac{\partial}{\partial x}
\left( \frac{H\overline{u}^2}{2} \frac{\partial (H+2z_b)}{\partial x}\right),
\end{eqnarray*}
leading to
\begin{eqnarray*}
B & = & - H\overline{u}\frac{\partial^2
  (H\overline{u})}{\partial x^2} + \frac{H}{2}\left(
  \frac{\partial^2}{\partial x^2} \left(H \overline{u}^2 + \frac{g}{2}H^2\right) + g
\frac{\partial}{\partial x}\left(H\frac{\partial z_b}{\partial
  x}\right) \right) \nonumber\\
& & + \frac{H\overline{u}^2}{2} \frac{\partial^2 (H+2z_b)}{\partial x^2} - \frac{1}{2}\frac{\partial
  \left(H+2z_b\right)}{\partial x}\left( \frac{\partial}{\partial
    x}\left(\frac{g}{2}H^2\right)
  +gH\frac{\partial z_b}{\partial x}\right)\nonumber\\
& = & H\left( -\overline{u}\frac{\partial^2
  (H\overline{u})}{\partial x^2} + \frac{1}{2}\frac{\partial^2
  (H\overline{u}^2)}{\partial x^2} + \frac{\overline{u}^2}{2} \frac{\partial^2 (H+2z_b)}{\partial x^2}\right) \nonumber\\
& & + \frac{gH}{2} \left(H \frac{\partial^2 (H+z_b)}{\partial
      x^2} - 2\frac{\partial z_b}{\partial x}\frac{\partial (H+z_b)}{\partial x} \right).
\end{eqnarray*}
Introducing the new variable
$$\overline{q}_{nh} = \sqrt{H}\overline{p}_{nh},$$
relation~\eqref{eq:d2p} becomes
\begin{equation}
- 4H^2\frac{\partial^2 \overline{q}_{nh}}{\partial x^2}  + \Lambda \overline{q}_{nh} =
8\sqrt{H} B,
\label{eq:d2p_new}
\end{equation}
that is an non-homogeneous differential equation with
$$\Lambda = 16 \left( 1 + \left(\frac{\partial z_b}{\partial
      x}\right)^2\right) -8H\frac{\partial^2 z_b}{\partial x^2} + 16
\frac{\partial H}{\partial x}\frac{\partial z_b}{\partial x}-
2H\frac{\partial^2 H}{\partial x^2} + 3 \left(\frac{\partial H}{\partial x}\right)^2.$$
And the sign of $\Lambda$ in Eq.~\eqref{eq:d2p_new} gives interesting
informations about the influence of the non-hydrostatic terms. Indeed,
for smooth/small variations of $z_b$ and $H$, we have $\Lambda > 0$
whereas large variations of $z_b$ and $H$ can lead to the situation
where $\Lambda < 0$.

When $\Lambda>0$, Eq.~\eqref{eq:d2p_new} corresponds to a diffusion type
equation and when
$\Lambda<0$, Eq.~\eqref{eq:d2p_new} corresponds to an
Helmholtz type equation. This remark is very important since situations
where $\Lambda<0$ may correspond to areas where the non-hydrostatic effects
can be significant

\subsection{Requirements for the pressure $\overline{p}$}

The positivity of the pressure $p$ for the incompressible Euler
equations (see~paragraph~\ref{subsec:assump}) is an acute problem. On
the one hand, the Euler system allows the pressure $p$ to be
non-positive, on the other hand $p<0$ means that the fluid is no more in
contact with the bottom and the system~\eqref{eq:euler}-\eqref{eq:ps},\eqref{eq:free_surf},\eqref{eq:bottom} has to be reformulated,
especially its boundary conditions.

This problem vanishes when considering the Saint-Venant
system. Indeed in this situation, the pressure term corresponds to
$$\frac{g}{2}H^2,$$
that is always non-negative.

When $H \rightarrow 0$ the Euler equations, the proposed
non-hydrostatic model but also the Saint-Venant system are no more physically
relevant. We would like in this situation, as for the Saint-Venant
system, that the model~\eqref{eq:model_proppos}-\eqref{eq:euler_55} well behaves both at the continuous and
discrete level.

%%%%%%%%%%%%%%%%%%%%%%%%
\section{Analytical solutions}
%%%%%%%%%%%%%%%%%%%%%%%%
\label{sec:anal_sol}

The analysis of the proposed non-hydrostatic model being very complex, the
knowledge of analytical solutions allows to examine the behavior of
the model in particular situations. Moreover, analytical solutions are
an important tool for the validation of numerical schemes.

In the following, we propose different analytical solutions for the averaged
non-hydrostatic model~\eqref{eq:model_proppos}-\eqref{eq:euler_55}.

\subsection{Time dependent analytical solutions}
\label{subsec:thacker_nh}

In this paragraph we consider the Euler system~\eqref{eq:euler} with
the boundary conditions~\eqref{eq:free_surf},\eqref{eq:bottom}
and~\eqref{eq:ps}. This system can also be written under the form
\begin{subnumcases}{\label{eq:euler_2d_mod}}
\frac{\partial H}{\partial t} + \frac{\partial}{\partial
  x}\int_{z_b}^\eta u\ dz = 0,\label{eq:euler_2d_mod11}\\
w = -\frac{\partial}{\partial x}\int_{z_b}^z u\ dz,\label{eq:euler_2d_mod22}\\
\frac{\partial {u}}{\partial {t}} + u\frac{\partial {u}}{\partial
  {x}} + w\frac{\partial
  {u}}{\partial {z}} + \frac{\partial {p}}{\partial {x}} = 0,\label{eq:euler_2d_mod33}\\
\frac{\partial {w}}{\partial {t}} + u\frac{\partial {w}}{\partial
  {x}} + w\frac{\partial
  {w}}{\partial {z}} + \frac{\partial {p}}{\partial {z}} = -g + s,\label{eq:euler_2d_mod44}
\end{subnumcases}
coupled with the boundary condition~\eqref{eq:ps} where $s$ is an external
forcing term.

And we have the following proposition.

\begin{proposition}
Let us consider the variables $u,w,H,z_b,p$ defined by
\begin{subnumcases}{\label{eq:sol}}
H(x, t) = \max \left( H_0 -\frac{b_2}{2} \left( x-\int^t_{\tilde{t}^0}
    f(t_1)dt_1\right)^2, 0 \right), \label{eq:sol1}\\
u(x,z,t) = f(t)\1_{H>0}, \label{eq:sol2}\\
w(x,z,t) = b_2 x f(t) \1_{H>0}, \label{eq:sol3}\\
z_b(x) = b_1+\frac{b_2}{2} x^2, \label{eq:sol4}\\
p(x,z,t) =  (g + b_2f^2)(H+z_b - z) \1_{H>0}, \label{eq:sol5}\\
s(x,z,t) = b_2 x \frac{d f}{ d t},  \label{eq:sol6}
\end{subnumcases}
where $H_0>0,b_1,b_2$ are constants and the function $f$ satisfies the ODE
\begin{equation}
\frac{d f}{dt} +b_2 (g+b_2 f^2) \int^t_{\tilde{t}^0}
f(t_1)dt_1= 0,\quad f(t_0) = f^0, \;\; \tilde{t}^0\in \R.
\label{eq:f}
\end{equation}
Then $u,w,H,z_b,p$ as defined previously
satisfy the 2d incompressible Euler equations with free
surface~\eqref{eq:euler_2d_mod} with the boundary
condition~\eqref{eq:ps} where $p^a=0$.
\label{prop:thacker_euler2d}
\end{proposition}
\begin{proof}
The proof relies on simple manipulations. Replacing~\eqref{eq:sol}
in~\eqref{eq:euler_2d_mod} shows the solution is analytic
when~\eqref{eq:f} is satisfied.
\end{proof}
\begin{remark}
Analytical solutions without the source term $s$
in~\eqref{eq:euler_2d_mod44} would have been a stronger
result. Nevertheless, since we only consider a source term for one of
the four
equations~\eqref{eq:euler_2d_mod}, it
remains an interesting result for numerical validations.
\end{remark}
These analytical solutions generalize the solutions obtained by
Thacker~\cite{thacker} for the shallow water equations. The analysis of the ODE~\eqref{eq:f} is not in the scope of this
paper.
Notice that the change of variables
$$h(t) = \int_{\tilde{t}^0}^t f(t_1)dt_1,$$
allows to
rewrite~\eqref{eq:f} under the form
\begin{equation}
\begin{split}
& \frac{d^2 h}{dt^2} +b_2 \left(g+b_2 \left(\frac{d h}{d t}\right)^2
\right) h = 0,\\
& h(t_0) = \int_{\tilde{t}^0}^{t^0} f(t_1)dt_1, \;\;
\dot{y}(t_0) = f(t_0) = f^0.
\end{split}
\label{eq:f2}
\end{equation}
It is worth noticing that when $H>0$ the free surface is a straight line varying with
time. Indeed, from the definitions of prop.~\ref{prop:thacker_euler2d}
and when $H>0$,
we get that for any $t$
$$H+z_b = b_1 -\frac{b_2}{2} \left( -2x \int^t_{\tilde{t}^0}
  f(t_1)dt_1
+ \left(\int^t_{\tilde{t}^0} f(t_1)dt_1\right)^2 \right),$$
that is a linear function of the $x$ variable.

The analytical solution depicted in prop.~\eqref{prop:thacker_euler2d}
is interesting for two reasons. First, it allows to confront a
numerical scheme with behaviors difficult to capture typically drying
and flooding. The second reason is explained in the following
proposition.

\begin{proposition}
The variables $H$, $\overline{u}$, $\overline{w}$, $z_b$ defined as in
Eqs.~\eqref{eq:sol1}-\eqref{eq:sol4} and $\overline{p}$
defined by
$$\overline{p} = \frac{g}{2}H + \overline{p}_{nh} = \frac{1}{H}\int_{z_b}^\eta p(x,z,t)
dz,$$
with $p$ given in~\eqref{eq:sol5} are analytical solutions of the
depth-averaged Euler system~\eqref{eq:model_proppos} completed with the
source term $s$.
\label{prop:thacker_svnh}
\end{proposition}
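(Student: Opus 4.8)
The plan is to exploit the fact that, in the Euler solution of Proposition~\ref{prop:thacker_euler2d}, the velocity field is independent of the vertical variable $z$. Indeed, by~\eqref{eq:sol2} and~\eqref{eq:sol3} one has $u = f(t)\mathbf{1}_{H>0}$ and $w = b_2 x f(t)\mathbf{1}_{H>0}$, so the depth-averages are simply $\overline{u} = f(t)$ and $\overline{w} = b_2 x f(t)$ on the wet region $\{H>0\}$, and the vertical fluctuations $u',w'$ of~\eqref{eq:uaverage} vanish identically. Consequently the inequalities~\eqref{eq:uaverage2} and~\eqref{eq:waverage2} become equalities, which means the closure relations~\eqref{eq:close1}, \eqref{eq:close2} and~\eqref{eq:close3} hold \emph{exactly}, and not merely as the energy-minimizing approximation, for this particular family of solutions.

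First I would treat the pressure. Using the decomposition $p = g(\eta-z)+p_{nh}$ together with~\eqref{eq:sol5} and $\eta = H+z_b$, the non-hydrostatic part is $p_{nh} = b_2 f^2(\eta-z)\mathbf{1}_{H>0}$, which is affine in $z$ and vanishes at the free surface, consistently with~\eqref{eq:ps}. A one-line integration then gives $\overline{p}_{nh} = \tfrac12 b_2 f^2 H$ and, evaluating at the bottom, $p_{nh}|_b = b_2 f^2 H$; hence $p_{nh}|_b = 2\overline{p}_{nh}$, so the pressure closure~\eqref{eq:choice_pb} also holds exactly. The same computation yields $\overline{p}=\tfrac1H\int_{z_b}^{\eta} p\, dz = \tfrac{g}{2}H + \overline{p}_{nh}$, which matches the definition of $\overline{p}$ in the statement.

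With all four closure relations verified, the conclusion follows from the averaging machinery of Section~\ref{subsec:depth-averaging}. Any solution of the Euler system satisfies the vertically integrated equations~\eqref{eq:av11}-\eqref{eq:av44}; and, exactly as in Proposition~\ref{prop:prop_ad}, once~\eqref{eq:close1}-\eqref{eq:close3} and~\eqref{eq:choice_pb} hold, these close up into~\eqref{eq:euler_11}-\eqref{eq:euler_44} in the averaged unknowns $H$, $\overline{u}$, $\overline{w}$, $\overline{p}_{nh}$. The only new feature compared with Proposition~\ref{prop:prop_ad} is the forcing $s$ of~\eqref{eq:euler_2d_mod44}; since $s = b_2 x\,\tfrac{df}{dt}$ is independent of $z$ by~\eqref{eq:sol6}, its depth-average contributes $\avg{\varphi s} = H s$ to the right-hand side of the vertical momentum equation~\eqref{eq:euler_33}, which is precisely what ``completed with the source term $s$'' means. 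As a cross-check, the constraint~\eqref{eq:euler_44}, equivalently~\eqref{eq:wbar1}, reduces with $\frac{\partial \overline{u}}{\partial x}=0$ and $\frac{\partial z_b}{\partial x}=b_2 x$ to $H\overline{w} = b_2 x f H$, consistent with $\overline{w}=b_2 x f$.

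The only point requiring care, and the main obstacle, is the treatment of the indicator $\mathbf{1}_{H>0}$ and of the moving dry front where $H\to 0$. On the open wet set $\{H>0\}$ all fields are smooth and the Leibniz and averaging steps used above are fully justified; across the boundary of this set one should either restrict the statement to the interior or interpret the averaged equations in a weak sense, noting that $H$, $H\overline{u}$, $H\overline{w}$ and $H\overline{p}_{nh}$ all vanish there. One should also note that the kinematic conditions~\eqref{eq:free_surf} and~\eqref{eq:bottom} required to derive~\eqref{eq:av11}-\eqref{eq:av44} are already guaranteed by Proposition~\ref{prop:thacker_euler2d}, so no extra verification of the boundary conditions is needed.
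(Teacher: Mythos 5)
Your proof is correct. It is worth noting that the paper gives no proof of this proposition at all: the implicit argument the authors have in mind is direct substitution of~\eqref{eq:sol1}--\eqref{eq:sol5} into~\eqref{eq:euler_11}--\eqref{eq:euler_44}, using the ODE~\eqref{eq:f} to verify the horizontal momentum balance. Your route is therefore a genuine alternative, and a more structural one: since $u$ and $w$ are independent of $z$ on the wet set, the fluctuations $u',w'$ vanish and the closures~\eqref{eq:close1}--\eqref{eq:close3} hold exactly; since $p_{nh}=b_2f^2(\eta-z)$ is affine in $z$ and vanishes at the free surface, one gets $\overline{p}_{nh}=\tfrac{1}{2}b_2f^2H$ and $p_{nh}|_b=b_2f^2H$, so~\eqref{eq:choice_pb} holds exactly as well; the machinery of Proposition~\ref{prop:prop_ad} then turns the Euler solution of Proposition~\ref{prop:thacker_euler2d} into a solution of~\eqref{eq:euler_11}--\eqref{eq:euler_44}. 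What this buys is precisely the content of the remark following the proposition: it explains \emph{why} this Euler solution solves the averaged model exactly rather than approximately (zero closure error), which a brute-force verification does not reveal. Two points you handle that deserve emphasis because the paper leaves them implicit: (i) Proposition~\ref{prop:prop_ad} concerns the unforced Euler system, and your extension~-- the forcing enters the averaged vertical momentum equation as $\avg{\varphi s}=Hs$ because $s$ is $z$-independent~-- is the correct reading of ``completed with the source term $s$''; indeed a direct computation shows that~\eqref{eq:euler_33} holds with right-hand side $2\overline{p}_{nh}+Hs=b_2f^2H+b_2xH\frac{df}{dt}$, whereas the alternative reading $2\overline{p}_{nh}+s$ would fail unless $H\equiv 1$; (ii) the restriction to the open wet set $\{H>0\}$ (or a weak reformulation across the dry front), which is needed to justify the Leibniz manipulations and which the paper also passes over in silence.
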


The propositions~\ref{prop:thacker_euler2d}
and~\ref{prop:thacker_svnh} produce a very important
consequence. Taking into account the source term $s$, we
have exhibited an analytical solution for the 2d Euler system~\eqref{eq:euler}-\eqref{eq:energy_eq_euler} with
free surface which is also an analytical solution for the
non-hydrostatic model~\eqref{eq:model_proppos}-\eqref{eq:euler_55} we
propose. This a strong argument proving our model is a good
approximation of the Euler system for shallow water flows. And this is
reinforced by the following proposition.

\red{
\begin{proposition}
When $f$ satisfies~\eqref{eq:f}, the solution~\eqref{eq:sol} is not an analytical solution of the
Green-Naghdi
model~\eqref{eq:gn3}-\eqref{eq:energy_gn}. If $f$ satisfies the ODE
\begin{equation}
\frac{d f}{dt} +b_2 \left( g+\frac{4 b_2}{3} f^2 \right) \int^t_{\tilde{t}^0}
f(t_1)dt_1= 0,\quad f(t_0) = f^0, \;\; \tilde{t}^0\in \R,
\label{eq:f_mod}
\end{equation}
then~\eqref{eq:sol} is an analytical solution of the Green-Naghdi
model~\eqref{eq:gn3}-\eqref{eq:energy_gn}. But the energy balance~\eqref{eq:energy_gn} is not consistent with the energy equation~\eqref{eq:energy_eq_euler} of the Euler system.
\label{prop:sol_anal_gn}
\end{proposition}
\begin{proof}[Proof of prop.~\ref{prop:sol_anal_gn}]
The proof relies on simple calculations. Since from~\eqref{eq:sol} we have
\begin{eqnarray*}
\int_{z_b}^\eta E\ dz & = & \int_{z_b}^\eta \left( \frac{u^2+w^2}{2} +
  gz \right) dz = -\frac{b_0 f^2}{4} \left( x -\int^t_{\tilde{t}^0} f(t_1)
  dt_1 \right)^{2}
 \left(  1 + b_0^{2} x^2
 \right) \\
& & + \frac{gb_0^2}{8} \left(
 \left( x^2 -\left( x -\int^t_{\tilde{t}^0} f(t_1) dt_1
 \right)^{2} \right)^{2}- x^4 \right)%\\
\end{eqnarray*}\begin{eqnarray*}& = & \overline{E}\\
& \neq & \overline{E}_{gn},
\end{eqnarray*}
this proves the result.
\end{proof}
To illustrate the difference between the solutions having the
form~\eqref{eq:sol} for the two non-hydrostatic
models~\eqref{eq:model_proppos} and~\eqref{eq:gn3}, we plot on
Fig.~\ref{fig:f_f_mod} the solutions of~\eqref{eq:f} and~\eqref{eq:f_mod}.
\begin{figure}[htbp]
\begin{center}
\includegraphics[width=8.1cm]{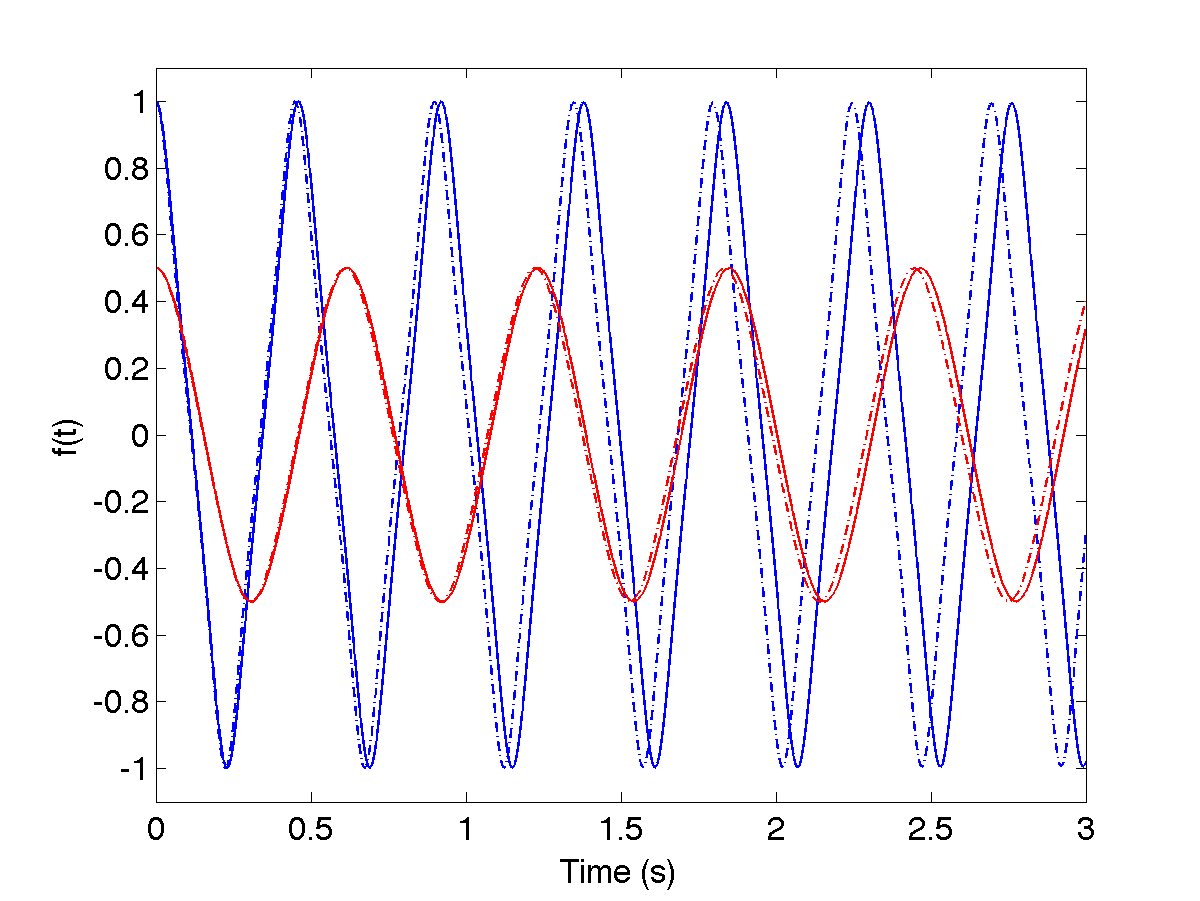}
\end{center}
\caption{Comparison of the solutions of ~\eqref{eq:f} (solid lines)
  and~\eqref{eq:f_mod} (dashed lines).}
\label{fig:f_f_mod}
\end{figure}
The solutions have been obtained using an implicit first order Euler
scheme solving~\eqref{eq:f} and~\eqref{eq:f_mod}. Over
Fig.~\ref{fig:f_f_mod}, the solid lines (resp. the dashed lines) correspond to solutions
of~\eqref{eq:f} (resp.~\eqref{eq:f_mod}). The two curves with
amplitude $1$ have been obtained with $b_0=15$, $\tilde{t}_0=t_0=0$ s,
$f(t_0)=1$ and the two curves with
amplitude $\frac{1}{2}$ have been obtained with $b_0=10$, $\tilde{t}_0=t_0=0$ s,
$f(t_0)=\frac{1}{2}$. We observe that whenever the solutions
of~\eqref{eq:f} and~\eqref{eq:f_mod} remain periodic, the period
differs especially for large values of $b_0$.
}

\subsection{Solitary wave solutions}
\label{subsec:solitary}

Using a process similar to what is done in~\cite{lemetayer,chazel2},
in the case where $z_b=cst$, we can exhibit solitary waves for the
system~\eqref{eq:model_proppos} under the form
\begin{subnumcases}{\label{eq:soliton}}
H =  H_0 + a \left( \sech\left(\frac{x-c_0t}{l}\right) \right)^2,\label{eq:soliton1}\\
\overline{u}  =  c_0 \left( 1 - \frac{d}{H} \right),\label{eq:soliton2}\\
\overline{w} = -\frac{ac_0 d}{l H}\sech \left(\frac{x-c_0t}{l}\right)
\sech' \left(\frac{x-c_0t}{l}\right),\label{eq:soliton3}\\
\overline{p}_{nh} = \frac{ac_0^2d^2}{2l^2H^2} \left( (2H_0-H)
  \left(\sech' \left(\frac{x-c_0t}{l}\right)\right)^2\right. \nonumber\\
\qquad\left. + H \sech \left(\frac{x-c_0t}{l}\right) \sech'' \left(\frac{x-c_0t}{l}\right)\right),\label{eq:soliton4}
\end{subnumcases}
where $f'$ denotes the derivative of function $f$ and
$$c_0 = \frac{l}{d}\sqrt{\frac{g H^3_0}{l^2-H^2_0}},\quad\mbox{and}\quad
a=\frac{H^3_0}{l^2-H^2_0},$$
and $(d,l,H_0) \in \R^3$ with $l>H_0>0$.

\red{The system~\eqref{eq:soliton} also gives analytical solutions for
  the Green-Naghdi system. Indeed, replacing $a$ and $c_0$ by
  $a_\gamma$ and $c_{0,\gamma}$ defined by
\begin{eqnarray*}
a_\gamma & = & \frac{H^3_0}{\frac{\gamma}{2} l^2-H^2_0},\\
c_{0,\gamma} & = & \sqrt{\frac{\gamma}{2}}\frac{l}{d}\sqrt{\frac{g H^3_0}{\frac{\gamma}{2} l^2-H^2_0}},
\end{eqnarray*}
with $(d,l,H_0) \in \R^3$ and $l>H_0>0$, the
system~\eqref{eq:soliton} gives an analytical solution for the
general system~\eqref{eq:general}.
Therefore, we are able to
compare the analytical solutions of the two non-hydrostatic
system. On Fig.~\ref{fig:soliton_1}, we have plotted the water depth at three
different instants $t_0=0$ s, $t_1=4$ s and $t_2=12$ s corresponding
to the propagation of the two analytical solitary waves with $H_0=1$
m and $d=2$ m. Fig.~\ref{fig:soliton_1}-{\it (a)}, the analytical
solutions of the depth-averaged model and the Green-Naghdi model are
depicted by the solid and dashed lines, respectively. The solutions
correspond to the choice $l=2$ m and the corresponding values of $a_2$
and $c_{0,2}$ for the depth-averaged model and the corresponding
values of $a_{3/2}$
and $c_{0,3/2}$ for the Green-Naghdi model.
We see on Fig.~\ref{fig:soliton_1}-{\it (a)} that starting from the
same physical parameters $H_0,d,l$, the two non-hydrostatic models
propagate two solitons but with different amplitudes and propagation velocities.
On the contrary, we can choose the physical parameters, typically $l$,
so that the two solitons have the same amplitude and propagation
velocities. Indeed, choosing for the depth-averaged system $l=2$ m and the corresponding values of $a_2$
and $c_{0,2}$ and for the Green-Naghdi model $l=\frac{4}{\sqrt{3}}$ m
and the corresponding values of $a_{3/2}$ and $c_{0,3/2}$ we obtained
on Fig.~\ref{fig:soliton_1}-{\it (b)} two solitons with the same
amplitude and propagation velocities but a slightly different shape.
\begin{figure}[htbp]
\begin{center}
\begin{tabular}{c}
\includegraphics[width=8.1cm]{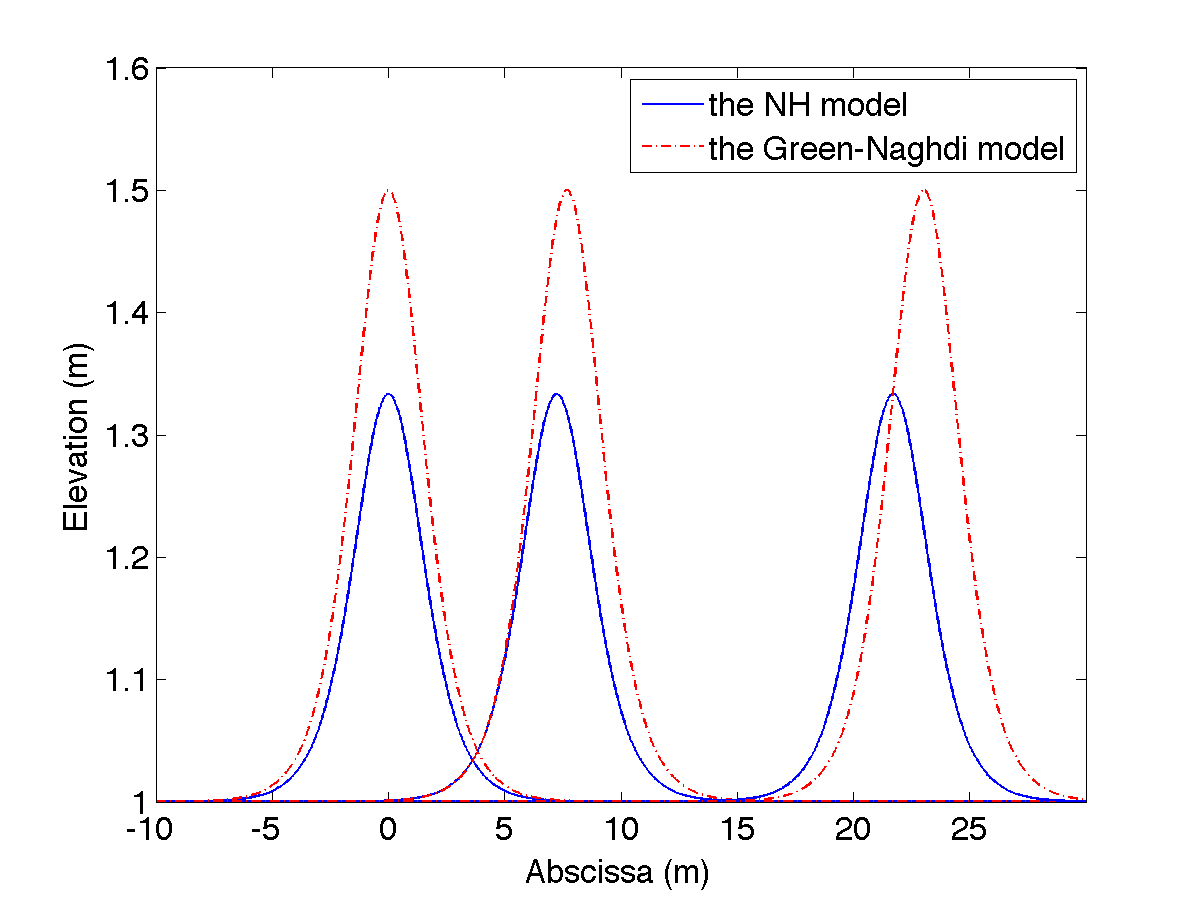}\\
{\it (a)}\\
\includegraphics[width=8.1cm]{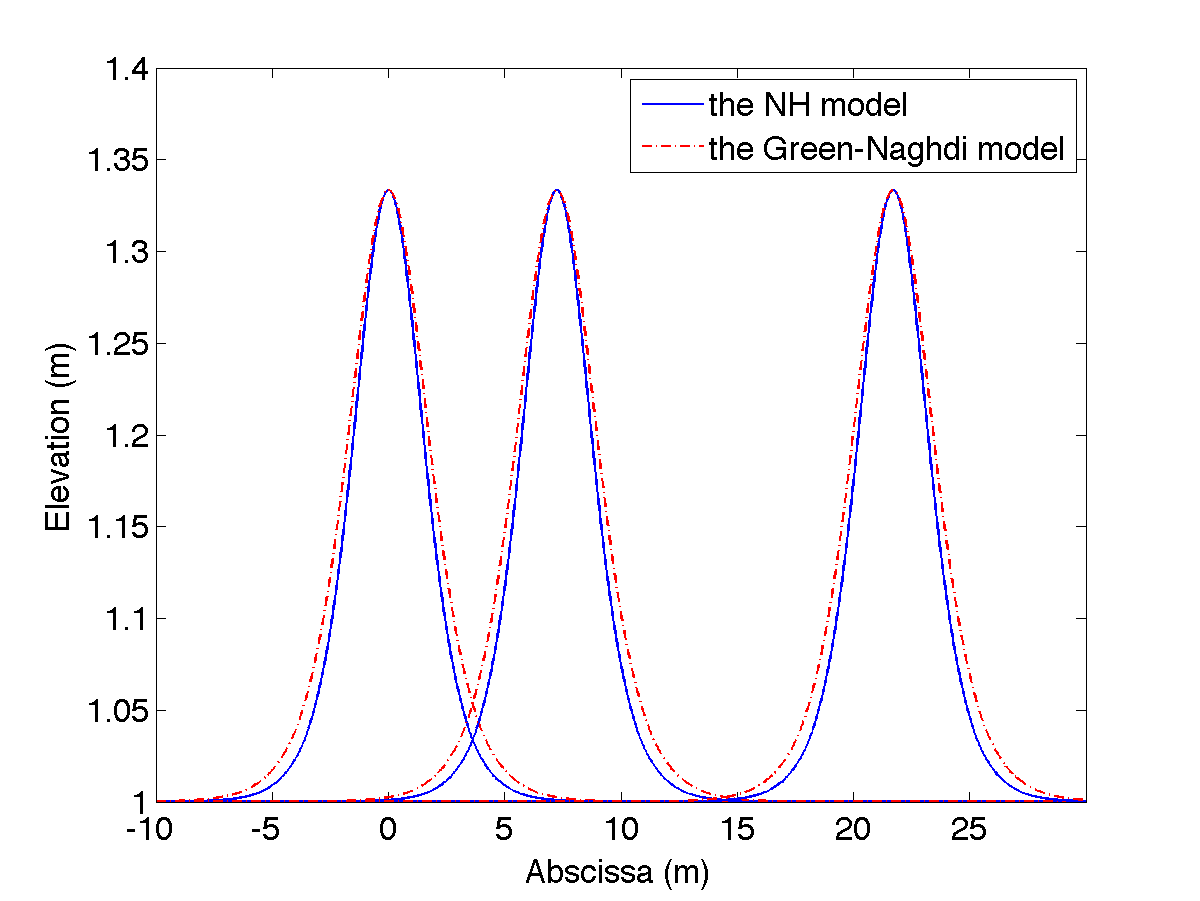}\\
 {\it (b)}
\end{tabular}
\end{center}
\caption{Comparaison of analytical solutions of the depth-averaged
  model (NH) (solid lines) and of the Green-Naghdi (dashed lines) in
  the case of a solitary wave: {\it (a)} same values of $l$ {\it (b)}
  same amplitude and propagation velocity.}
\label{fig:soliton_1}
\end{figure}
}

\subsection{Stationary solutions}
\label{subsec:station_sol}

\subsubsection{Regularity of stationary solutions}

Simple manipulations show that stationary analytical solutions of
\eqref{eq:model_proppos} have to satisfy
\begin{subnumcases}{\label{eq:anal}}
H \overline{u} = Q_0 = Cst,\label{eq:anal_1}\\
\frac{\partial}{\partial x}\left(\frac{Q^2_0}{H} +\frac{g}{2}H^2 + H\overline{p}_{nh}\right) = - \left(gH+2\overline{p}_{nh}\right)\frac{\partial z_b}{\partial x},\label{eq:anal_2}\\
H\overline{w} = \frac{Q_0}{2}\frac{\partial}{\partial x} \left(
  H+2z_b\right),\label{eq:anal_3}\\
\overline{p}_{nh} = \frac{Q_0}{2}\frac{\partial
  \overline{w}}{\partial x}, \label{eq:anal_4}
\end{subnumcases}
% or equivalently
% \begin{equation}
% A \frac{\partial X}{\partial x} = F,
% \label{eq:sys}
% \end{equation}
% with $X=(H,\overline{w},\overline{p}_{nh})^T$ and
% $$A = \left(\begin{array}{ccc}
% -\frac{Q^2_0}{H} + g + \overline{p}_{nh} & 0 & H\\
% 0 & \frac{Q_0}{2} & 0\\
% \frac{Q_0}{2H} & 0 & 0
% \end{array}\right),\quad\mbox{and}\quad F = \left(\begin{array}{c}
% -\frac{2H}{Q_0}(gH + 2\overline{p}_{nh})\frac{\partial z_b}{\partial
%   x}\\
% \frac{\overline{p}_{nh}}{2Q_0}\\
% \frac{1}{H}(gH + 2\overline{p}_{nh})\frac{\partial z_b}{\partial
%   x}
% \end{array}\right).$$
% When $Q_0=0$, the stationary solutions correspond to
% $$\overline{u}=\overline{w}=\overline{p}_{nh}=0,$$
% and $H+z_b = cst$.
% When $Q_0 \neq 0$, since
% $$\det A = \frac{Q_0^2}{4} \neq 0,$$
% the solution of~\eqref{eq:sys} satisfies
% $$\frac{\partial X}{\partial x} = A^{-1}F,$$
or equivalently

\ \vspace*{-10pt}
\begin{subnumcases}{\nonumber}
\frac{\partial H}{\partial x} = \frac{2}{Q_0} H\overline{w} - 2
\frac{\partial z_b}{\partial x}\nonumber\\
\frac{\partial \overline{w}}{\partial x} = \frac{2}{Q_0}
\overline{p}_{nh}, \nonumber\\
\frac{\partial \overline{p}_{nh}}{\partial x} =
  \left( \frac{Q^2_0}{H^2} - gH - \overline{p}_{nh} \right) \left(\frac{2}{Q_0} \overline{w} - \frac{2}{H}
\frac{\partial z_b}{\partial x}
  \right) - \left(g+\frac{2\overline{p}_{nh}}{H}\right)\frac{\partial z_b}{\partial x},\nonumber
\end{subnumcases}
and $\overline{u} = \frac{Q_0}{H}$. Hence, as long as $H>0$, we have $(H,\overline{w},\overline{p}_{nh}) \in (C^k)^3$ if $z_b \in C^k$. This means
that when $z_b$ is at least continuous, the stationary solutions of
the non-hydrostatic model are necessarily continuous and do not
admit shocks.

\subsubsection{Stationary quasi-analytical solutions}

From the previous writing, we deduce the following proposition.
\begin{proposition}
Choosing $Q_0$, a boundary condition $H_0$ for $H$ and a given function $f=f(x)$
corresponding to the desired vertical velocity i.e. $\overline{w}=f$, then the variables
$\overline{p}_{nh} ,H,z_b,\overline{u},$ defined by
\begin{subnumcases}{\label{eq:anal_stat}}
\overline{p}_{nh} = \frac{Q_0}{2}\frac{\partial f}{\partial x},\label{eq:anal_stat_1}\\
\left(\frac{g}{2}H - \frac{Q_0^2}{H^2}\right)\frac{\partial H}{\partial x}  =
  -\frac{H}{Q_0}\left( gH + Q_0 \frac{\partial f}{\partial x}\right) f
  - \frac{Q_0}{2} H \frac{\partial^2 f}{\partial x^2},\label{eq:anal_stat_2}\\
\frac{\partial z_b}{\partial x}   =  -\frac{1}{2}\frac{\partial H}{\partial x}  + \frac{Hf}{Q_0},\label{eq:anal_stat_3}\\
\overline{u} = \frac{Q_0}{H} \label{eq:anal_stat_4},
\end{subnumcases}
are stationary quasi-analytical of the system
\eqref{eq:model_proppos}.%\\

The word ``quasi-analytical''
refers to the fact that the previous set of equations only contains two simple ODEs that have to be solved numerically.
\label{prop:anal_sol}
\end{proposition}

\begin{proof}[Proof of proposition~\ref{prop:anal_sol}]
The proof is very simple, it only consists in a reformulation of the
system~\eqref{eq:anal_1}-\eqref{eq:anal_3} with the assumption $\overline{w}=f$, $f$ given.
\end{proof}

\begin{remark}
Since the quantity
$$\frac{g}{2}H - \frac{Q_0^2}{H^2},$$
appears in the ODE to solve~\eqref{eq:anal_stat_4}, it is possible to obtain solutions
for $H$
with discontinuities. But necessarily, due to the second equation to
solve, discontinuities also appears over $z_b$. Thus, this is not
contradictory with the results in paragraph~\ref{subsec:station_sol}.
\label{rem:discont}
\end{remark}
\red{As in paragraphs~\ref{subsec:thacker_nh} and~\ref{subsec:solitary}, we compare the stationary
  solutions for the depth-averaged model~\eqref{eq:model_proppos} and
  the Green-Naghdi system~\eqref{eq:gn3}. Following prop.~\ref{prop:anal_sol}, we can also
  exhibit stationary quasi-analytical solutions for the Green-Naghdi
  system~\eqref{eq:gn3}. For any given enough smooth function, $f$ and
  the solutions of the system
\begin{subnumcases}{\label{eq:anal_stat_gn}}
\overline{p}_{nh} = \frac{Q_0}{2}\frac{\partial f}{\partial x},\label{eq:anal_stat_1_gn}\\
\left(\frac{g}{2}H - \frac{Q_0^2}{H^2}\right)\frac{\partial H}{\partial x}  =
  -\frac{H}{Q_0}\left( gH + \frac{4Q_0}{3} \frac{\partial f}{\partial x}\right) f
  - \frac{2}{3}Q_0 H \frac{\partial^2 f}{\partial x^2},\label{eq:anal_stat_2_gn}\\
\frac{\partial z_b}{\partial x}   =  -\frac{1}{2}\frac{\partial H}{\partial x}  + \frac{Hf}{Q_0},\label{eq:anal_stat_3_gn}\\
\overline{u} = \frac{Q_0}{H} \label{eq:anal_stat_4_gn},
\end{subnumcases}
are analytical solutions of the Green-Naghdi
system~\eqref{eq:gn3}. Numerical comparisons between the solutions of
systems~\eqref{eq:anal_stat} and~\eqref{eq:anal_stat_gn} are given in the following paragraph.
}

\subsubsection{Numerical illustrations}

To illustrate the analytical solutions described by
prop.~\ref{prop:anal_sol}, we give below two typical examples. The
analytical solutions are obtained choosing
\begin{equation}
f(x) = 2c (x-a) e^{-b (x-a)^2},
\label{eq:f_anal_stat}
\end{equation}
and correspond to a channel of length $L=10$ $m$ where we impose the
inflow $Q_0>0$ at the entrance (left boundary) and the water depth $H_0$ at the
exit (right boundary). For Fig.~\ref{fig:sol_anal_1}, the following
parameters values $Q_0=1.8$ $m^2.s^{-1}$, $H_0=1$ $m$, $a=5$ $m$, $b=3.4$ $m^{-2}$ and $c=1.5$
$s^{-1}$ are considered. On Fig.~\ref{fig:sol_anal_1}-{\it (a)}, we
compare the free surface $\eta=H+z_b$ obtained with the
quasi-analytical solution~\eqref{eq:anal_stat_2},\eqref{eq:anal_stat_4} of the non-hydrostatic model to the one obtained with the Saint-Venant system (with
the same topography $z_b$ and the same boundary conditions). Likewise
on Fig.~\ref{fig:sol_anal_1}-{\it (b)}, we
compare the velocity field $\overline{u}$ obtained with the
depth-averaged Euler model to the one obtained with the Saint-Venant system (with
the same topography $z_b$ and the same boundary conditions). The
velocity field $\overline{w}$ corresponding to the depth-averaged
system is also plotted on~ Fig.~\ref{fig:sol_anal_1}-{\it (b)}. Over
Fig.~\ref{fig:sol_anal_1}-{\it (c)}, we compare the total pressure
$gH/2+\overline{p}_{nh}$ to its hydrostatic part $gH/2$.

Figure~\ref{fig:sol_anal_2} is similar to Figure~\ref{fig:sol_anal_1}
but has been obtained with the parameters values $Q_0=1.35$ $m^2.s^{-1}$,
$a=5$ $m$, $b=4.6$ $m^{-2}$ and $c=1.0$
$m^{-1}$. Figures~\ref{fig:sol_anal_1} and~\ref{fig:sol_anal_2}
emphasize the influence of the non-hydrostatic effects.

\red{On Fig.~\ref{fig:sol_anal_3}, we compare the quasi-analytical
  solutions of the systems~\eqref{eq:anal_stat}
  and~\eqref{eq:anal_stat_gn}. The definition of the function $f$ is
  still given by~\eqref{eq:f_anal_stat}. For the inflow $Q_0$, we have
  chosen $Q_0=1.3$ $m^2.s^{-1}$, the
  boundary condition $H_0$ and the parameters $a$, $b$ and
  $c$ have the same values as for Fig.~\ref{fig:sol_anal_2}. Notice that
  the resolution of~\eqref{eq:anal_stat_gn} with exactly the same
  parameters values as those used for Fig.~\ref{fig:sol_anal_2}
  i.e. with $Q_0=1.35$ $m^2.s^{-1}$ instead of $Q_0=1.3$ $m^2.s^{-1}$
  leads to a discontinuous solution (see remark~\ref{rem:discont}).
In the Green-Naghdi model, the amplitude of the waves is higher than in the depth-averaged model.
}
\begin{figure}[htbp]
\begin{center}
\begin{tabular}{c}
\includegraphics[width=7.5cm]{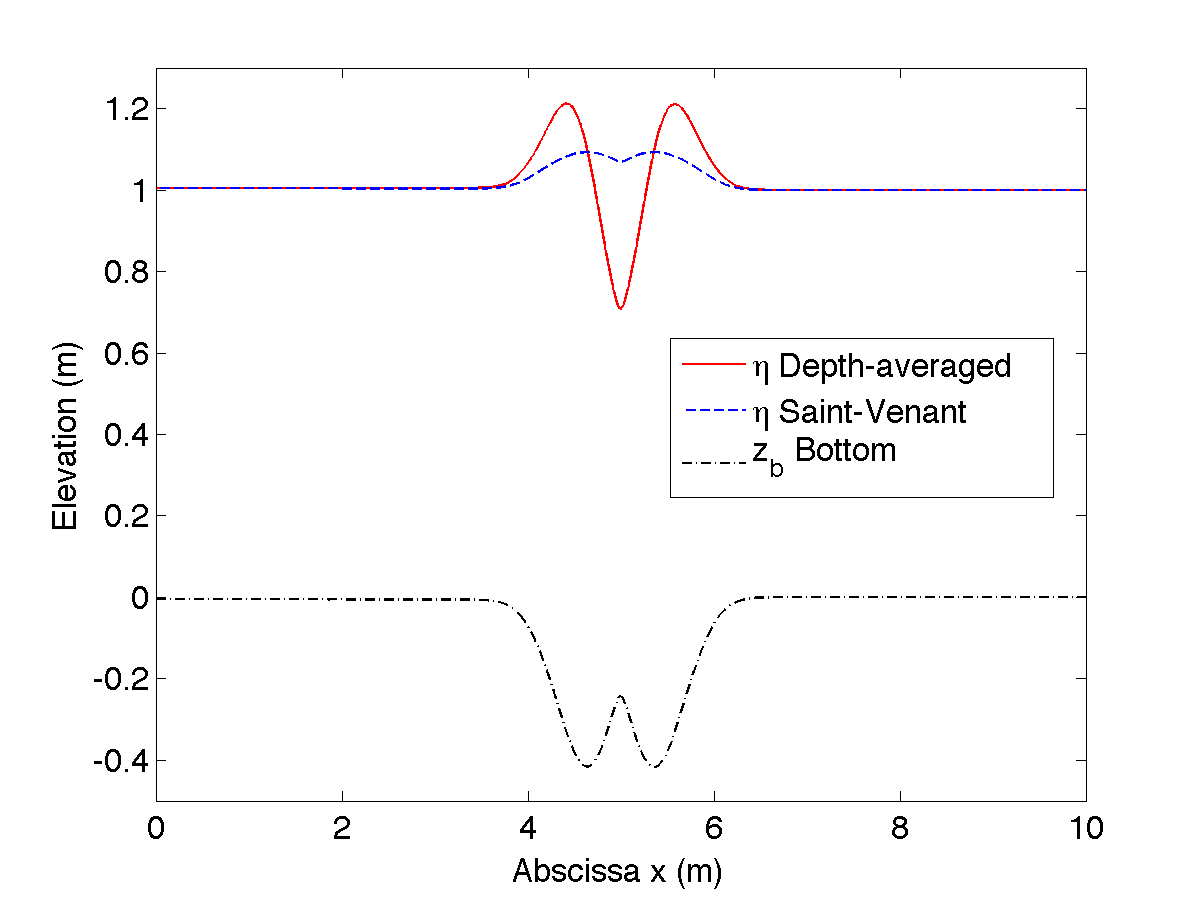}\\
{\it (a)}\\
\includegraphics[width=7.5cm]{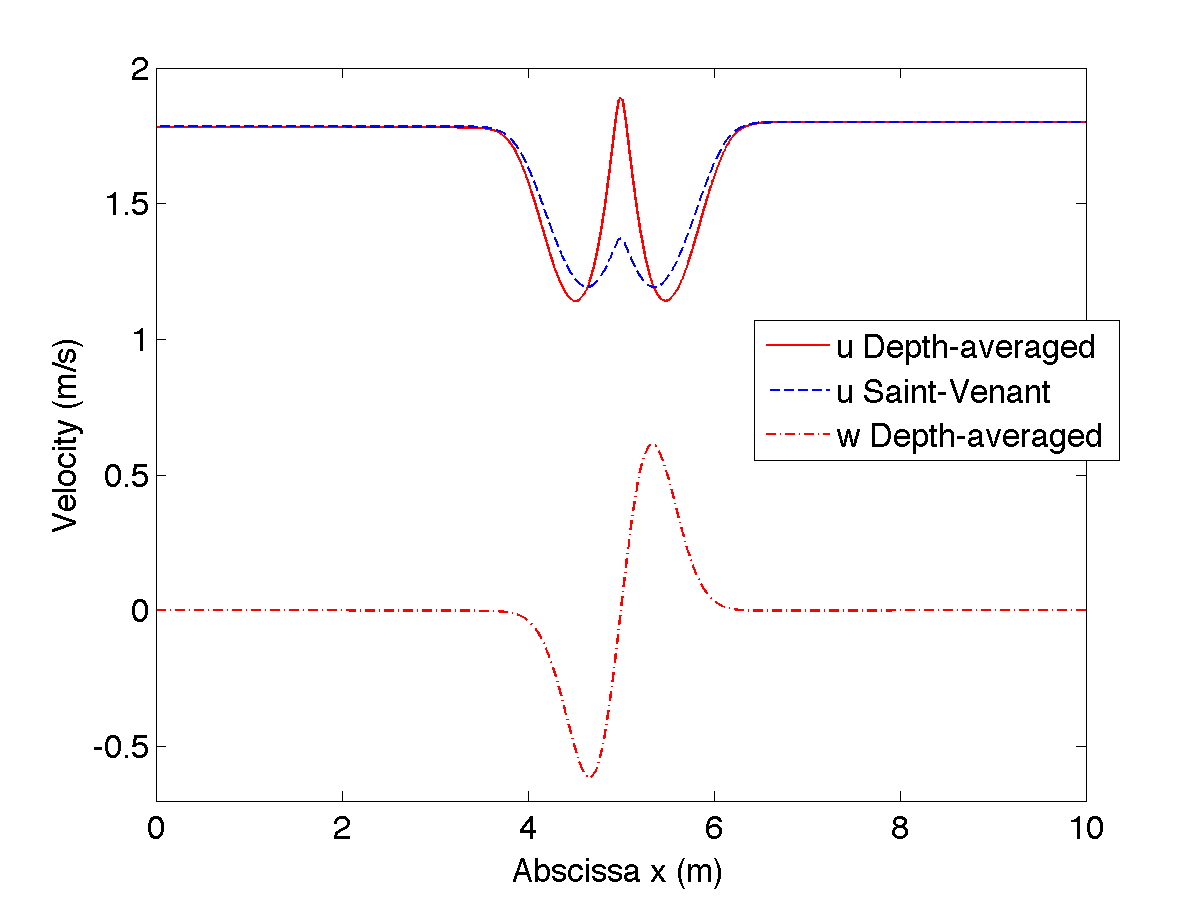}\\
{\it (b)}\\
\includegraphics[width=7.5cm]{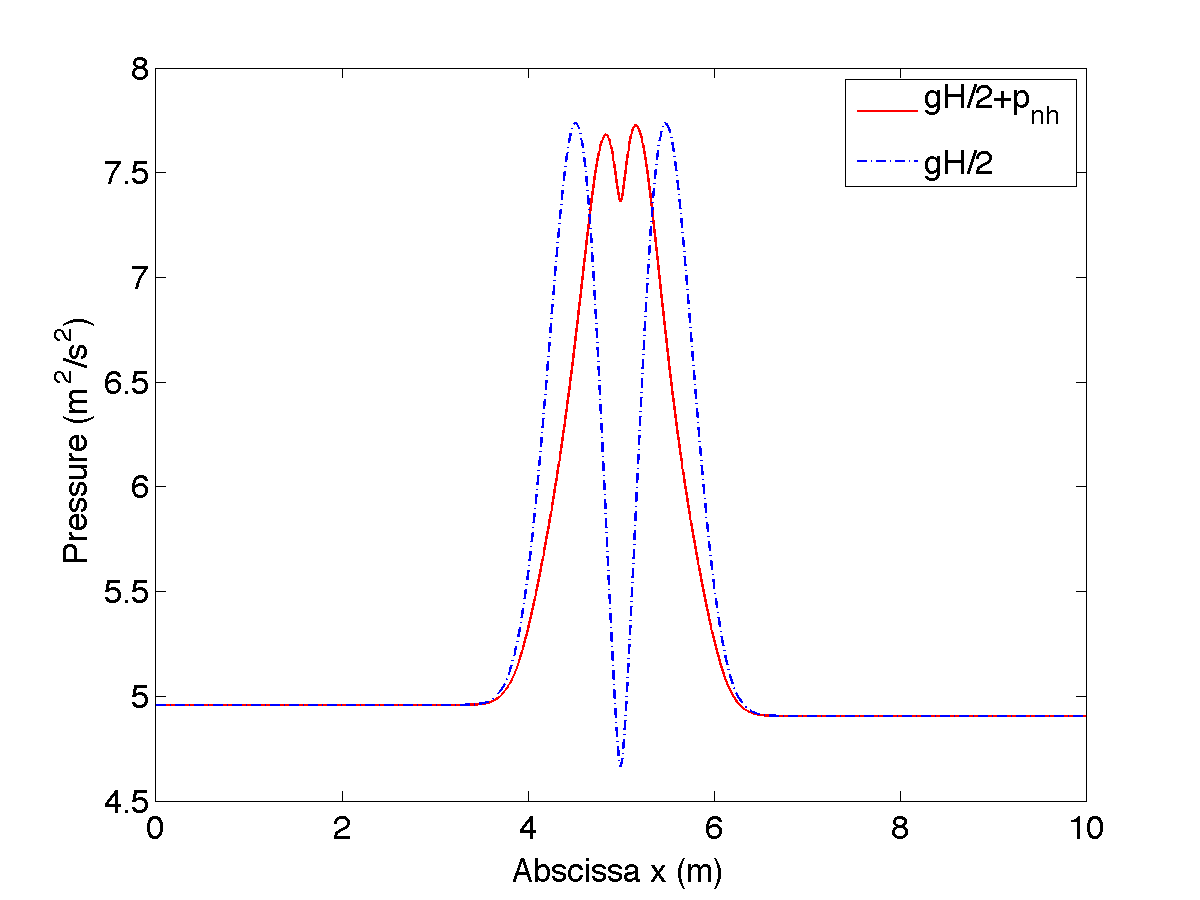}\\
{\it (c)}
\end{tabular}
\end{center}
\caption{Analytical solutions - Comparison of Saint-Venant and depth-averaged Euler
    solutions: {\it (a)} free surface $H+z_b$ and
  bottom profile $z_b$,  {\it (b)} velocities $\overline{u}$ and
  $\overline{w}$ and  {\it (c)} total pressure
  $gH/2+\overline{p}_{nh}$ and hydrostatic part of the pressure $gH/2$.}
\label{fig:sol_anal_1}
\end{figure}

\begin{figure}[htbp]
\begin{center}
\begin{tabular}{c}
\includegraphics[width=7.5cm]{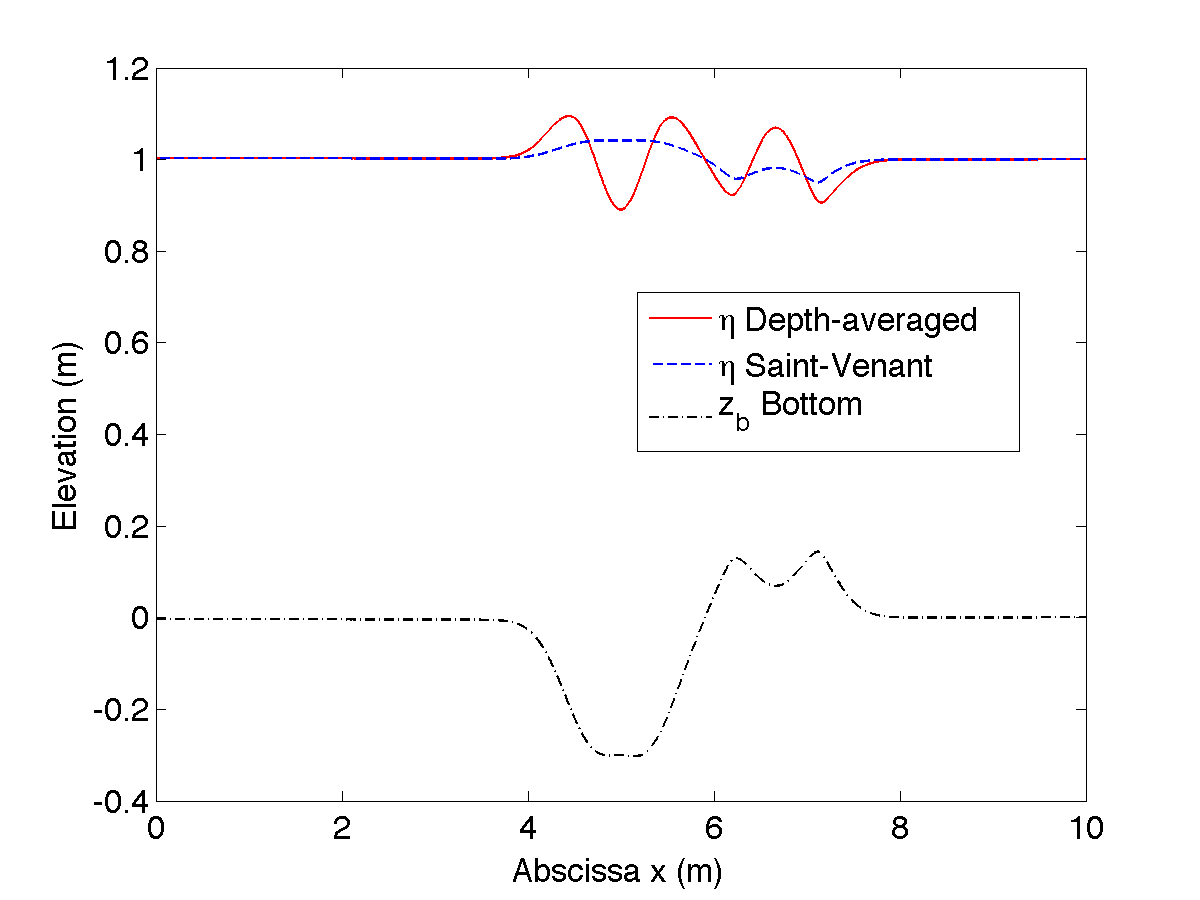}\\
{\it (a)}\\
\includegraphics[width=7.5cm]{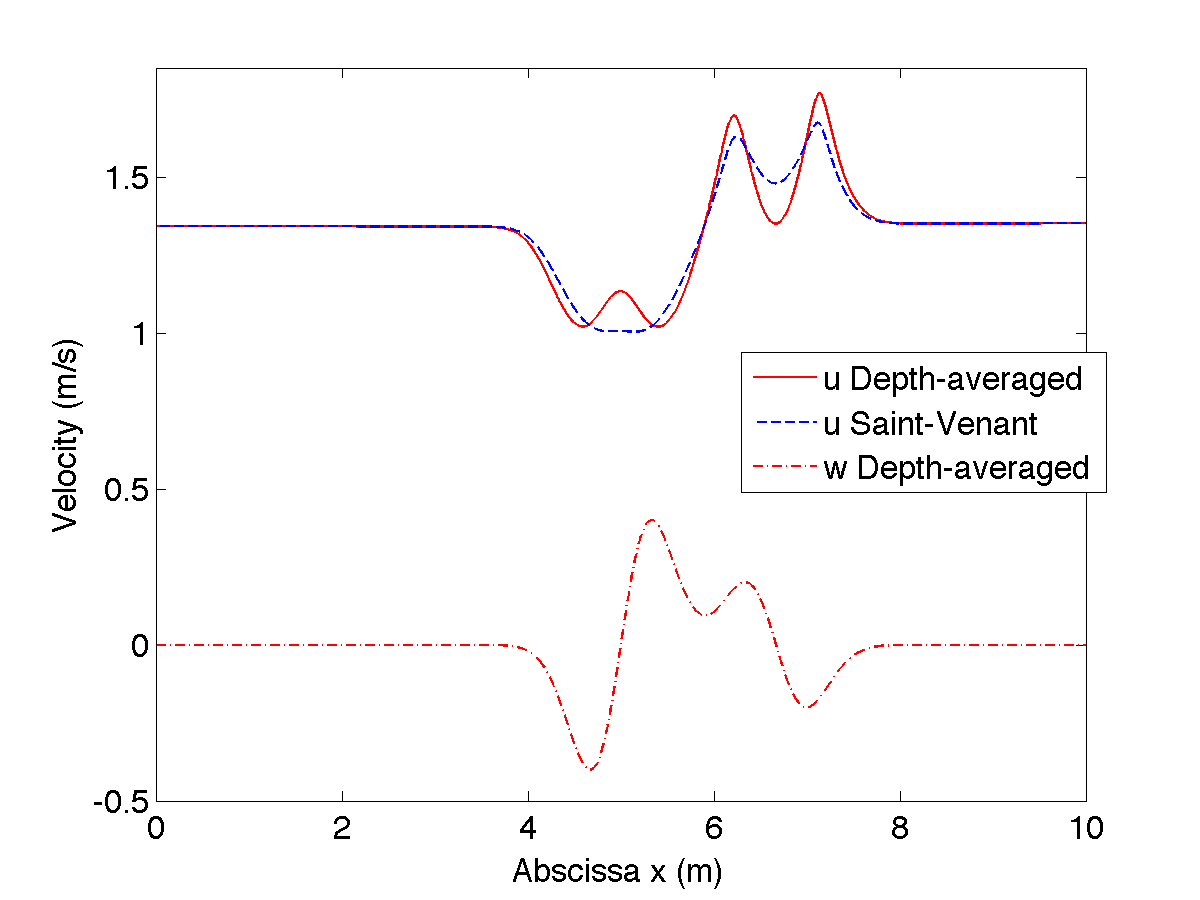}\\
{\it (b)}\\
\includegraphics[width=7.5cm]{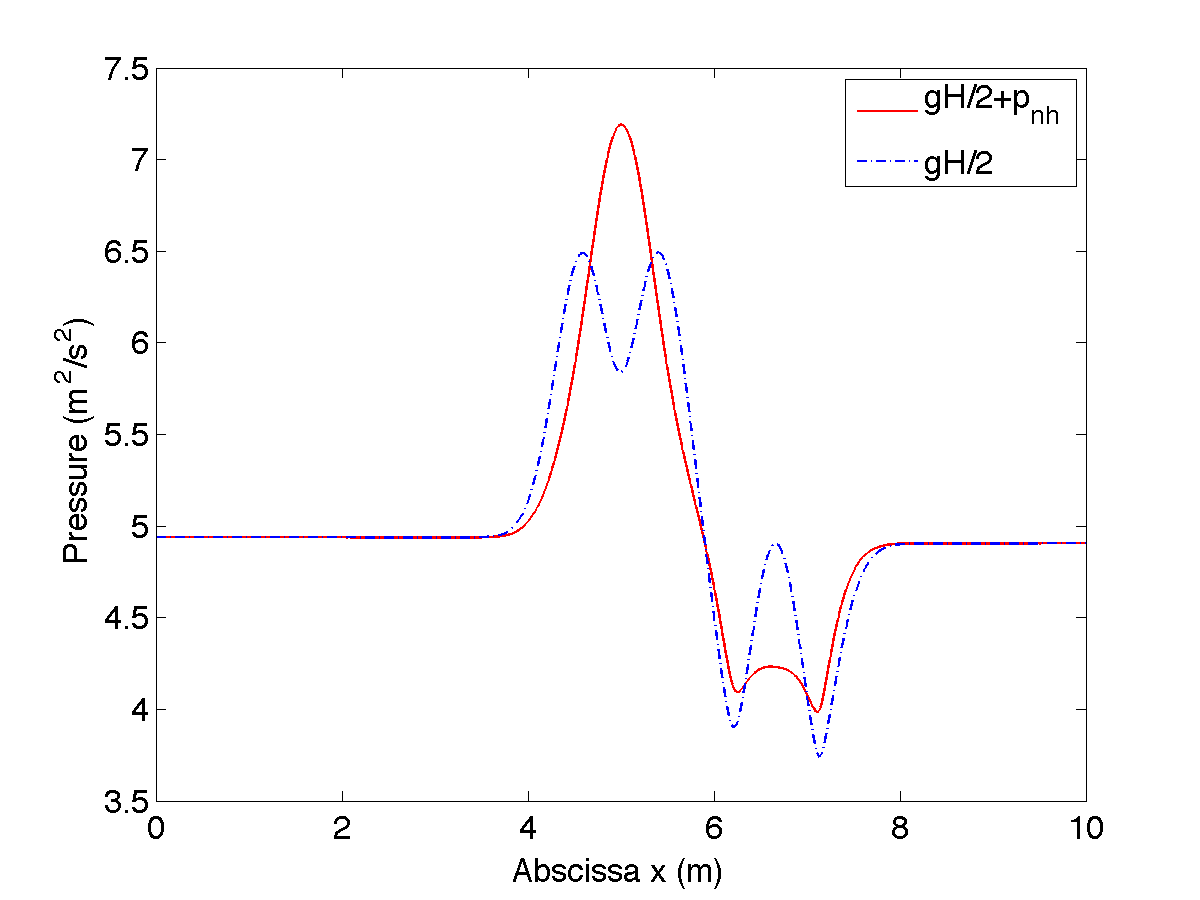}\\
{\it (c)}\\
%\multicolumn{2}{c}{\includegraphics[width=7.5cm]{Figures/sol_anal2_p}}\\
%\multicolumn{2}{c}{{\it (c)}}
\end{tabular}
\end{center}
\caption{Analytical solutions- Comparison of Saint-Venant and depth-averaged Euler
    solutions: {\it (a)} free surface $H+z_b$ and
  bottom profile $z_b$,  {\it (b)} velocities $\overline{u}$ and
  $\overline{w}$ and  {\it (c)} total pressure
  $gH/2+\overline{p}_{nh}$ and hydrostatic part of the pressure $gH/2$.}
\label{fig:sol_anal_2}
\end{figure}

\begin{figure}[htbp]
\begin{center}
\begin{tabular}{c}
\includegraphics[width=7.5cm]{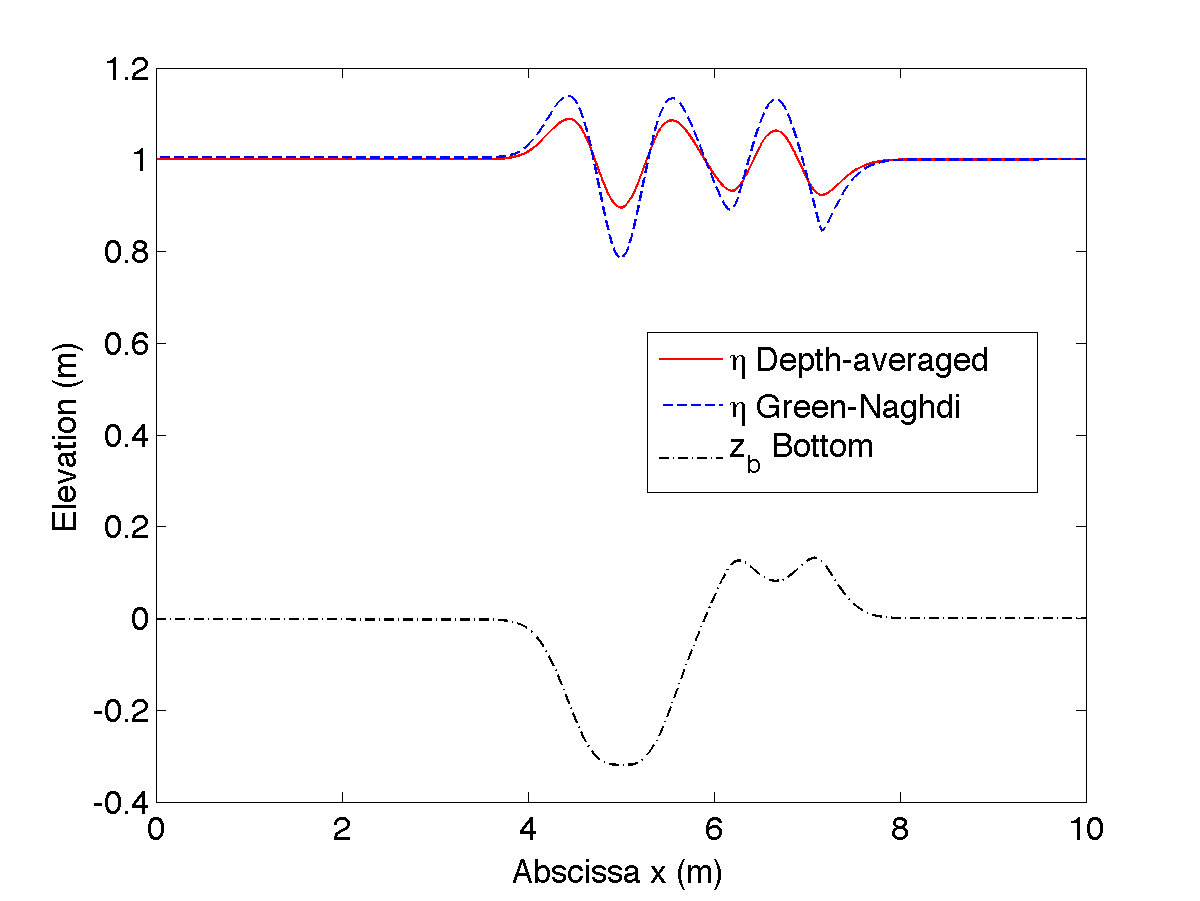}\\
{\it (a)} \\
\includegraphics[width=7.5cm]{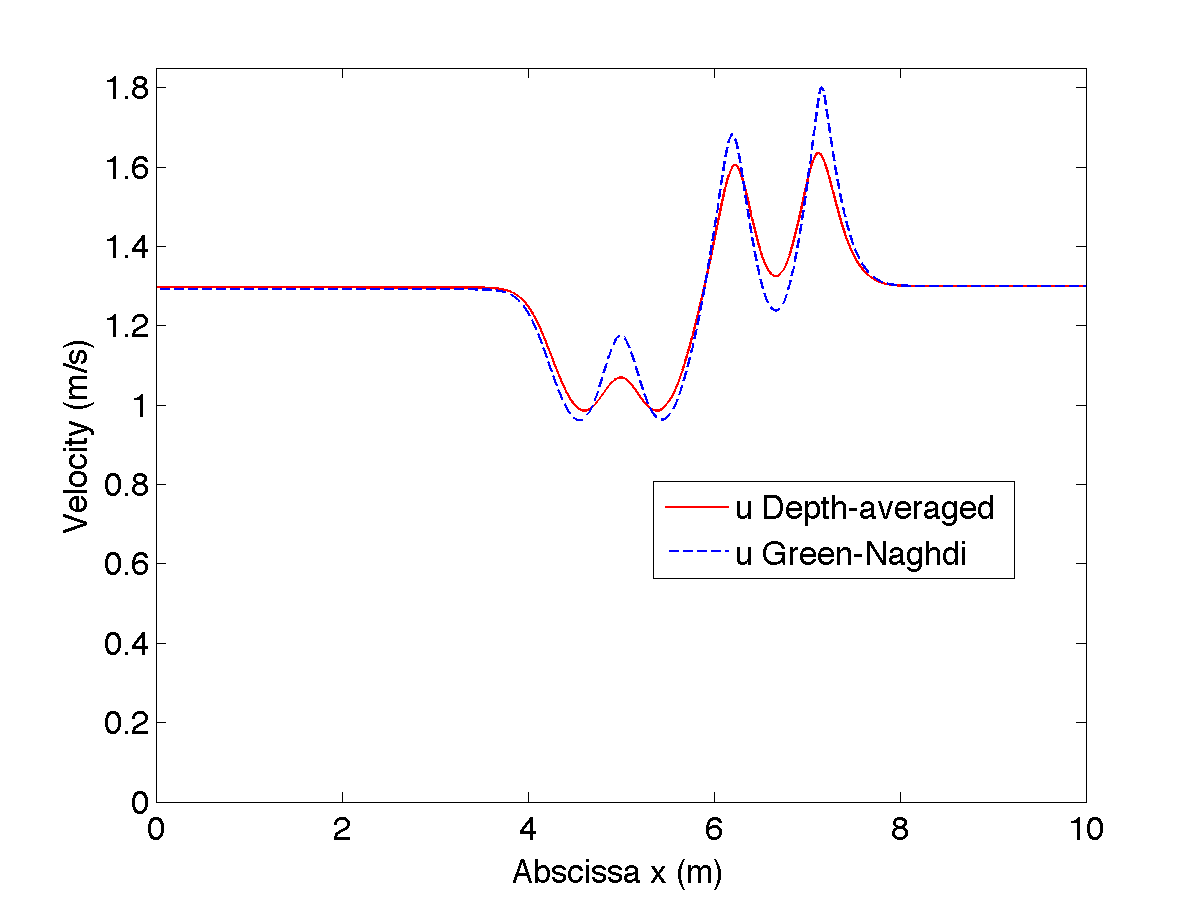}\\
{\it (b)}\\
\includegraphics[width=7.5cm]{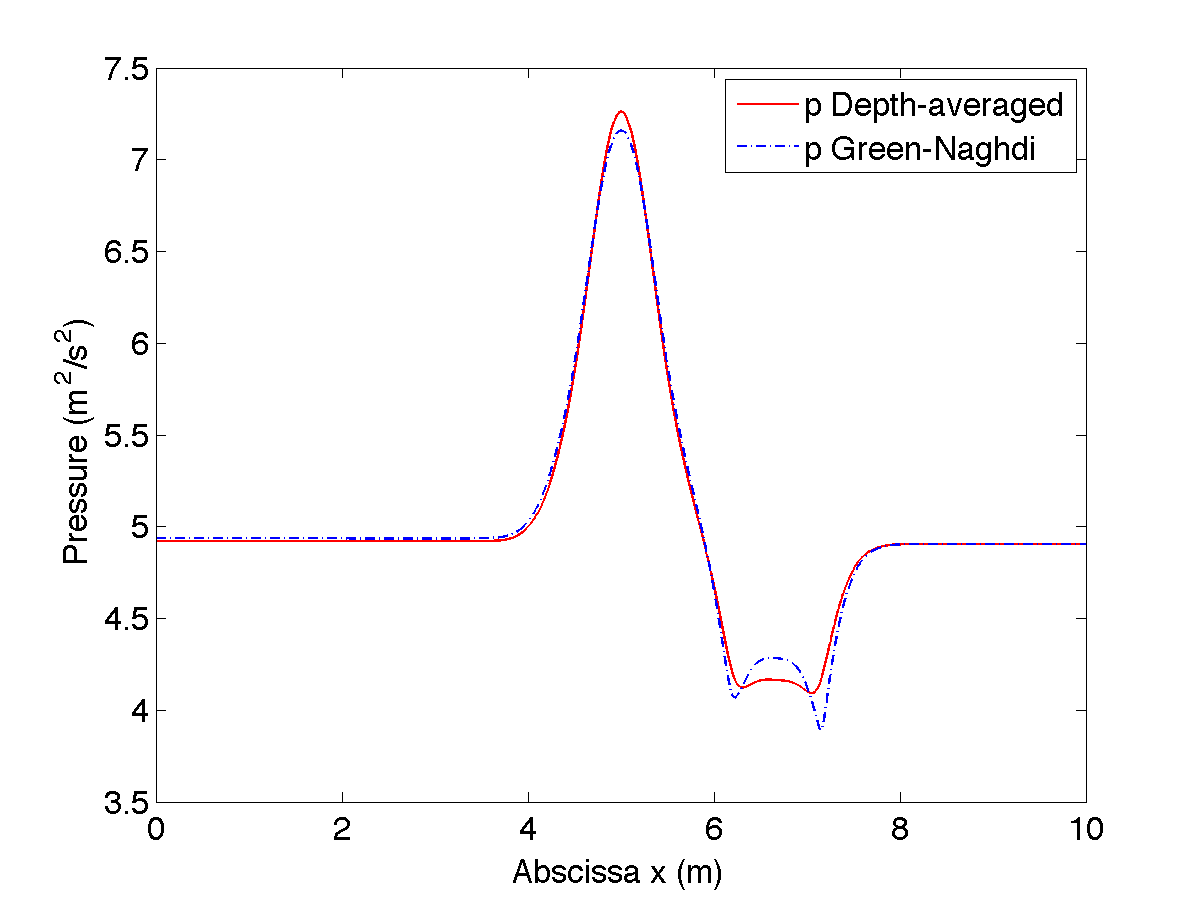}\\
{\it (c)}
\end{tabular}
\end{center}
\caption{Analytical solutions- Comparison of Green-Naghdi and depth-averaged Euler
    solutions: {\it (a)} free surface $H+z_b$ and
  bottom profile $z_b$,  {\it (b)} velocities $\overline{u}$ and  {\it (c)} total pressures
  $gH/2+\overline{p}_{nh}$.}
\label{fig:sol_anal_3}
\end{figure}

%%%%%%%%%%%%%%%%%
\section{Conclusion}
%%%%%%%%%%%%%%%%%

In this paper we have proposed a shallow water type model integrating
the non-hydrostatic effects. The derivation process is based on a
minimization principle and suitable closure relations.

The proposed depth-averaged Euler system has interesting properties
\begin{itemize}
\item the model formulation only involves first order partial
  derivatives,
\item the derivation process naturally provides with an expression for
  the topography source terms,
\item the proposed model is similar to the well-known Green-Naghdi
  model
but gives a natural expression of the topography source term,
\item starting from the Navier-Stokes system instead of the Euler
  system, a depth-averaged version of the Navier-Stokes system is
  obtained integrating the viscous/friction effects.
\end{itemize}
Since the pressure terms are not necessarily non negative, the
behavior of the averaged model when the water depth tends to zero has to be
clarified. The derivation of an
efficient and robust numerical scheme able to treat theses situations is
under study.

\section*{Acknowledgments}

This work was primarily undertaken during the fourth author's
secondment at Inria. Cerema is acknowledged for partial support of the
second author.
The authors also thank Nora A\"issiouene, Emmanuel Audusse, Nicole Goutal
and Benoit Perthame for helpful discussions.

\providecommand{\href}[2]{#2}
\providecommand{\arxiv}[1]{\href{http://arxiv.org/abs/#1}{arXiv:#1}}
\providecommand{\url}[1]{\texttt{#1}}
\providecommand{\urlprefix}{URL }

\medskip
% The data information below will be filled by AIMS editorial staff
Received April 2014; revised September  2014.
\medskip
\end{document}